\definecolor{VeryDarkGreen}{rgb}{0,0.18,0.08}
\definecolor{VeryDarkBrown}{rgb}{0.12,0.08,0.04}
\DeclareMathOperator{\Hom}{Hom}
\DeclareMathOperator{\id}{id}
\DeclareMathOperator{\coker}{coker}
\DeclareMathOperator{\colim}{colim}
\DeclareMathOperator{\Sol}{Sol}
\DeclareMathOperator{\Spec}{Spec}
\title[Intermediate extensions and smooth pullbacks commute]{Intermediate extensions of perverse constructible $\mathbb{F}_p$-sheaves commute with smooth pullbacks}
\begin{document}

\swapnumbers
\theoremstyle{plain}
\newtheorem{Le}{Lemma}[section]
\newtheorem{Ko}[Le]{Corollary}
\newtheorem{Theo}[Le]{Theorem}
\newtheorem*{TheoB}{Theorem}
\newtheorem{Prop}[Le]{Proposition}
\newtheorem*{PropB}{Proposition}
\newtheorem{Con}[Le]{Conjecture}
\theoremstyle{definition}
\newtheorem{Def}[Le]{Definition}
\newtheorem*{DefB}{Definition}
\newtheorem{Bem}[Le]{Remark}
\newtheorem{Bsp}[Le]{Example}
\newtheorem*{BspB}{Example}
\newtheorem{Be}[Le]{Observation}
\newtheorem{Sit}[Le]{Situation}
\newtheorem{Que}[Le]{Question}
\newtheorem{Dis}[Le]{Discussion}
\newtheorem{Prob}[Le]{Problem}
\newtheorem*{Konv}{Conventions}
\newtheorem{claim}[Le]{Claim}
\newtheorem{Bla}[Le]{}

\def\cocoa{{\hbox{\rm C\kern-.13em o\kern-.07em C\kern-.13em o\kern-.15em
A}}}

\author{Axel St\"abler}

\address{Axel St\"abler\\Department of Mathematics, University of Michigan, Ann Arbor, MI 48109, USA}
\curraddr{Johannes Gutenberg-Universit\"at Mainz\\ Fachbereich 08\\
Staudingerweg 9\\
55099 Mainz\\
Germany}

\email{staebler@uni-mainz.de}

\date{\today}

\subjclass[2010]{Primary 14F20; Secondary 13A35, 14F10}

\begin{abstract}
We prove that intermediate extensions of perverse constructible $\mathbb{F}_p$-sheaves commute with smooth pullbacks for embeddable schemes over a field of characteristic $p$. Along the way we also prove that the equivalence of categories of Cartier crystals with unit $R[F]$-modules commutes with $f^!$ for a smooth morphism $f: X \to Y$ of embeddable schemes.
\end{abstract}

\maketitle

\section{Introduction}
First, let us consider the category of perverse constructible $\mathbb{F}_\ell$-sheaves on $X_{\'et}$, where $X$ is, say, of finite type over a field $k$ of characteristic $p >0 $ and $\ell \neq p$ is a prime. Then it is well-known that every object of this category has finite length. It is then desirable to understand the simple objects of this category. This is where the intermediate extension $j_{!\ast}$ comes into play, where $j$ is a locally closed immersion. Any simple perverse sheaf on $X$ is of the form $j_{!\ast} \mathcal{F}$ for some locally closed immersion $j: U \to X$ and $\mathcal{F}$ a simple locally constant perverse sheaf on $U$.
The intermediate extension $j_{!\ast} \mathcal{F}$ of $\mathcal{F}$ along $j$ in turn is given as the image of the natural map $j_! \mathcal{F} \to j_\ast \mathcal{F}$. Equivalently, one may define it as the smallest subobject $\mathcal{S}$ of $j_\ast \mathcal{F}$ for which $j^{-1} \mathcal{S} = \mathcal{F}$. It is a consequence of the smooth base change theorem that intermediate extensions commute with smooth pullbacks. That is, if $f: X \to Y$ is a smooth morphism and $j: U \to Y$ a locally closed immersion, then $f^{-1} j_{!\ast} \cong j'_{! \ast} f'^{-1}$ where $f'$ and $j'$ are the base changes of $f$ and $j$.
\[\begin{xy} \xymatrix{ f^{-1}(U) \ar[r]^{j'} \ar[d]^{f'} &X \ar[d]^f \\ U \ar[r]^j & Y }  \end{xy} \]

The purpose of this note is to investigate the behavior of intermediate extensions under smooth pullbacks in the case where $\ell = p$. In this case Gabber (\cite{gabbertstructures}) introduced a notion of perversity on the category of constructible sheaves. Emerton and Kisin introduce in \cite{emertonkisinrhunitfcrys} the category of locally finitely generated unit $F$-modules and construct an anti-equivalence, called $\Sol$, from this category to the category of perverse constructible $\mathbb{F}_p$-sheaves. In \cite[Corollary 4.2.2]{emertonkisinintrorhunitfcrys} intermediate extensions are introduced via locally finitely generated unit $F$-modules. Namely, for a locally closed immersion $j: U \to X$  and a locally finitely generated unit $F$-module $M$ on $U$ one defines $j_{!+} M$ as the smallest subobject $S$ of $j_+ M$ that satisfies $j^! S = j^! j_+ M$, where $j_+$ corresponds to $j_!$ on the constructible side and similarly $j^!$ corresponds to $j^{-1}$. The authors also show that any simple perverse sheaf is obtained in this way.

We prove that the intermediate extension commutes with smooth pullbacks for morphisms between schemes that are embeddable into smooth schemes. Our proof uses the anti-equivalent category of so-called \emph{Cartier crystals}. This anti-equivalence is obtained by composing the anti-equivalence $\Sol$ with an equivalence $\Sigma$ from Cartier crystals to finitely generated unit $F$-modules which was constructed by Blickle and B\"ockle in \cite{blickleboecklecartierfiniteness} in the smooth case and shown to extend to the embeddable case in \cite{schedlmeiercartierpervers} by Schedlmeier.

The category of Cartier crystals also admits an intrinsic description of intermediate extensions as shown in (\cite{schedlmeiercartierpervers}). Moreover, in Cartier crystals intermediate extensions relate to so-called \emph{test modules} (see \cite[Definition 3.1, Remark 3.3 ]{blicklep-etestideale}) under certain circumstances. It is known by work of the author that the formation of test modules does commute with smooth twisted inverse images (\cite[Corollary 4.8]{staeblerunitftestmoduln}). We will show that intermediate extensions in Cartier crystals commute with smooth twisted inverse images (Theorem \ref{SmoothPullbackIE}) which is technically much simpler than the corresponding result for test modules.

In order to achieve the corresponding result in the category of perverse sheaves we also have to show that under the anti-equivalence of categories between Cartier crystals and perverse constructible $\mathbb{F}_p$-sheaves the twisted inverse image $f^!$ of a smooth morphism $f: X \to Y$ corresponds to the pullback $f^{-1}$ for schemes $X, Y$ that are embeddable into a smooth scheme. While this is certainly expected there is currently no proof of this available which is why we include one here.

Finally, let us remark that the assumption that our schemes are embeddable is only necessary for the equivalence to work. It is not required in order to show that intermediate extensions in Cartier crystals commute with smooth twisted inverse images. Conjecturally, the equivalence with constructible sheaves should also be true in a more general context.

We will review the necessary theory of Cartier modules and crystals as well as the anti-equivalence between Cartier crystals and perverse constructible sheaves in the next section (i.e.\ Section \ref{Cartiercrystals}). Then we will proceed to show that this anti-equivalence interchanges $f^!$ and $f^{-1}$ in Section \ref{EquivalenceShriekPullbackCompatible}. This is somewhat technical and we encourage the reader to skip ahead to Section \ref{IESection} on a first reading, where we prove that intermediate extensions commute with twisted inverse images in Cartier crystals. 

\subsection*{Conventions} Throughout $F$ denotes the absolute Frobenius morphism and schemes are assumed to be noetherian. Given a scheme $X$ and an $\mathcal{O}_X$-module $M$ we denote by $F^e_\ast M$ the $\mathcal{O}_X$-module which as a sheaf of abelian groups is $M$ but with module structure induced by the $e$th iterate of the Frobenius, i.e. $r \cdot m = r^{p^e}m$ for sections $r \in \mathcal{O}_X(U)$ and $m \in M(U)$ for $U \subseteq X$ open. We will work exclusively in positive prime characteristic and assume that schemes $X$ considered are \emph{$F$-finite}, i.e.\ the Frobenius morphism $F: X \to X$ is a finite morphism. Given an $F$-finite field $k$, we call a $k$-scheme $X$ \emph{embeddable} if there exists a closed immersion $X \to X'$ with $X'$ smooth over $k$.

We also will have to assume that our schemes admit a notion of relative dimension. Therefore we further restrict our attention to schemes for which the irreducible components coincide with the connected components. If $f: X \to Y$ is a morphism and $X_1, \ldots, X_r$ are the irreducible components of $X$, then $f(X_i)$ is irreducible and thus contained in a unique irreducible component of $Y$ which we denote by $Y_i$. We define the relative dimension of $f$ as the tuple $(\dim X_1 - \dim Y_1, \ldots, \dim X_r - \dim Y_r)$. In fact, once we know that intermediate extensions commute with base change with respect to open immersions this case also reduces to the irreducible case but we will not need this (see Remark \ref{IrredComponents}).

If we fix some $F$-finite field $k$ then the category of embeddable $k$-schemes (for which irreducible components $=$ connected components) form a full subcategory of the category of $k$-schemes which we denote by $Sch_{\text{emb}}$. In particular, morphisms in $Sch_{\text{emb}}$ are assumed to be $k$-linear.

\subsection*{Acknowledgements} The author was supported by grant STA 1478/1-1 of the Deutsche Forschungsgemeinschaft (DFG). I thank M.\ Blickle for useful discussions, T.\ Schedlmeier as well as the referee for a careful reading of this article and useful comments.

\section{Cartier crystals and unit $F$-modules}
\label{Cartiercrystals}

In this section we explain the necessary background around Cartier crystals and unit $F$-modules. Unless otherwise noted $X$ is an $F$-finite noetherian scheme. Note that any scheme (essentially) of finite type over an $F$-finite field (e.g.\ a perfect field) is $F$-finite.

\subsection{Cartier modules and crystals}

We refer the reader to \cite{blickleboecklecartierfiniteness} for a gentle introduction. Background about the derived category of Cartier crystals may be found in \cite{blickleboecklecartiercrystals}. Both sources restrict to the case of a single structural map which corresponds to the case of a Cartier algebra generated by a single element in degree $1$. For a discussion of Cartier modules in the setting of a Cartier algebra as in Definition \ref{DefCartierAlgebra} below see \cite{blicklestaeblerfunctorialtestmodules}.

\begin{Def}
\label{DefCartierAlgebra}
A \emph{Cartier algebra} $\mathcal{C}$ on a scheme $X$ is a graded sheaf of rings $\bigoplus_{e \geq 0} \mathcal{C}_e$ with an $\mathcal{O}_X$-bimodule structure which satisfies $r \kappa = \kappa r^{p^e}$ for any local section $r$ and any local homogeneous element $\kappa$ of degree $e$. Moreover, we assume that $\mathcal{C}_0 = \mathcal{O}_X$.
\end{Def}

A \emph{Cartier module} (or $\mathcal{C}$-module if we want to stress the algebra) is a left $\mathcal{C}$-module. We say that a Cartier module $M$ is (quasi-)coherent if the underlying $\mathcal{O}_X$-module is so.

Of central importance is the notion of \emph{nilpotence}.

\begin{Def}
We say that a coherent $\mathcal{C}$-module $M$ is \emph{nilpotent} if there exists $e \geq 0$ such that $(\mathcal{C}_+)^e M = 0$. A morphism $\varphi: M \to N$ of coherent $\mathcal{C}$-modules is a \emph{nil-isomorphism} if $\ker \varphi$ and $\coker \varphi$ are nilpotent.
\end{Def}

The category of nilpotent Cartier submodules is a Serre subcategory of coherent Cartier modules (see \cite[Lemma 2.11]{blickleboecklecartierfiniteness} for the case that $\mathcal{C} = \mathcal{O}_X\langle\kappa\rangle$, the general case is similar). Hence, we may localize at this subcategory (i.e.\ one formally inverts all nil-isomorphisms) and call the localized category \emph{Cartier crystals}. More precisely, objects in Cartier crystals are the same as in Cartier modules. A morphism  $\alpha: M \to N$ is a diagram of the form $M \xleftarrow{\beta} P \to N$, where $\beta$ is a nil-isomorphism. We will denote the category of (coherent) Cartier crystals by $Crys(X)$.

Given a Cartier module $M$ an element $\kappa$ of $\mathcal{C}_e(U)$, where $U \subseteq X$ is any open, acts on $M(U)$ as an $\mathcal{O}_X(U)$-linear map $\Hom_{\mathcal{O}_X(U)}(F_\ast^e M(U), M(U))$. The most important example (and the reader may restrict to this case on a first reading) is the case where $\mathcal{C}$ is principal, i.e. $\mathcal{C} = \mathcal{O}_X\langle\kappa\rangle$ generated by a single element $\kappa$ in degree $1$ (say). Then the datum of a coherent Cartier module corresponds to a coherent $\mathcal{O}_X$-module $M$ together with an $\mathcal{O}_X$-linear map $F_\ast M \to M$. By abuse of notation we will denote this map again by $\kappa$. Equivalently, one may view $M$ as a \emph{right module} over the non-commutative polynomial ring quotient $\mathcal{O}_X[F]$ which is given by \[\mathcal{O}_X[F](U) := \mathcal{O}_X(U)\{F\}/\mathcal{O}_X(U)\langle r^p F - F r\, \vert\, r \in \mathcal{O}_X(U)\rangle.\] for any $U \subseteq X$ open.

In the case where we have a single map as our Cartier structure we will also write $(M, \kappa)$ to denote the datum of a Cartier module. In this case $M$ is nilpotent if and only if $\kappa^e M = 0$ for all $e \gg 0$.

In order to be able to construct injective resolutions and to construct a functor $f_\ast$ of Cartier crystals for a finite type morphism $f$ we will also need to weaken the coherence assumption to quasi-coherence. The notion of nilpotence is then too strong and we need a slight variant of this:

\begin{Def}
\label{Deflocnilpotent}
Let $M$ be a quasi-coherent $\mathcal{C}$-module. We call $M$ \emph{locally nilpotent} if there are nilpotent Cartier submodules $M_e$ ($e \in \mathbb{N}$) such that $\bigcup_{e \geq 0} M_e = M$. A morphism $\varphi:M \to N$ is called a \emph{nil-isomorphism} if both $\ker \varphi$ and $\coker \varphi$ are locally nilpotent.
\end{Def}

Note that for a coherent Cartier module the notions of local nilpotence and nilpotence agree. The category of locally nilpotent Cartier modules again form a Serre subcategory of quasi-coherent Cartier modules. We may thus again pass to the localized category of \emph{quasi-coherent Cartier crystals}.

\begin{Theo}
\label{CartierShriek}
Let $f: X \to Y$ be a morphism of $F$-finite schemes and $\mathcal{C}_Y$ a Cartier algebra on $Y$. Then $\mathcal{C}_X = f^\ast \mathcal{C}_Y$ is a Cartier algebra on $X$. Moreover, in each of the following cases the twisted inverse image functor $f^!$ on quasi-coherent modules induces a functor from quasi-coherent $\mathcal{C}_Y$-modules to quasi-coherent $\mathcal{C}_X$-modules:
\begin{enumerate}[(a)]
\item{$f$ is smooth, so that $f^! \bullet = f^\ast \bullet \otimes \omega_{X/Y}$. In particular, if $f$ is \'etale, then $f^! = f^\ast$.}
\item{$f$ is finite, so that $Rf^! = \bar{f}^\ast R\Hom(f_\ast \mathcal{O}_Y, \bullet)$.}
\end{enumerate}
In each of these cases $f^!$ preserves coherence and (local) nilpotence. In particular, one obtains induced functors on the category of (quasi-)coherent Cartier crystals.
\end{Theo}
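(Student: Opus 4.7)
My plan is to first verify the algebra claim, then construct the Cartier action on $f^!M$ via a compatibility isomorphism between $f^!$ and $F^e_*$, and finally check preservation of coherence and (local) nilpotence. That $\mathcal{C}_X = f^*\mathcal{C}_Y$ is a Cartier algebra on $X$ follows from naturality: $f^*$ preserves the graded $\mathcal{O}$-ring structure, and the defining bimodule relation $r\kappa = \kappa r^{p^e}$ pulls back to the analogous relation on $X$ since $f^*(r^{p^e}) = (f^*r)^{p^e}$; moreover $\mathcal{C}_{X,0} = f^*\mathcal{O}_Y = \mathcal{O}_X$.

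The core step in both cases is the construction of a family of natural isomorphisms $\sigma_e \colon F^e_{X*} \circ f^! \xrightarrow{\sim} f^! \circ F^e_{Y*}$, forming a coherent system as $e$ varies; given these, the Cartier action on $f^!M$ corresponding to $\kappa_Y \colon F^e_{Y*} M \to M$ is the composite $F^e_{X*} f^!M \xrightarrow{\sigma_e} f^!(F^e_{Y*} M) \xrightarrow{f^!(\kappa_Y)} f^!M$, and naturality of $\sigma_e$ ensures that morphisms of Cartier modules are sent to morphisms of Cartier modules. In case (a), $f$ smooth (hence flat) together with the fact that $F^e_Y$ is affine yields, by flat base change along the commutative square $f \circ F^e_X = F^e_Y \circ f$, an isomorphism $f^* F^e_{Y*}(-) \cong F^e_{X*} f^*(-)$; tensoring with $\omega_{X/Y}$ and carefully tracking how the relative canonical sheaf interacts with Frobenius on $X$ yields $\sigma_e$. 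In case (b), $f$ finite gives $f^!(-) = \bar f^* \Hom(f_*\mathcal{O}_X, -)$, and $\sigma_e$ comes from the natural identification $\Hom(f_*\mathcal{O}_X, F^e_{Y*} M) \cong F^e_{Y*}\Hom(f_*\mathcal{O}_X, M)$, a consequence of pushforward along the affine morphism $F^e_Y$ commuting with $\Hom$ against a coherent sheaf. That the resulting action respects multiplication in $\mathcal{C}_Y$ amounts to a cocycle condition on $\{\sigma_e\}$, verified directly from the construction.

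Coherence is then clear from the explicit formulas. For nilpotence, if $(\mathcal{C}_{Y,+})^n M = 0$ then iterating $\sigma_e$ identifies any $n$-fold composition of Cartier operators on $f^!M$ with $f^!$ applied to an $n$-fold composition that vanishes on $M$, hence $f^!M$ is nilpotent of the same index. Local nilpotence reduces to the nilpotent case by writing $M$ as a filtered union of nilpotent coherent submodules and using that $f^!$ commutes with filtered colimits in both cases: in (a) because $f^*$ and tensoring with a line bundle do, and in (b) because $\bar f^*$ and $\Hom(f_*\mathcal{O}_X, -)$ against a coherent sheaf do. The main technical obstacle I anticipate is rigorously establishing $\sigma_e$ and its cocycle property in case (a), where the interplay between $\omega_{X/Y}$ and Frobenius pushforward requires careful bookkeeping.
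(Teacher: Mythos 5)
The central device of your construction --- a natural isomorphism $\sigma_e\colon F^e_{X\ast}\circ f^!\xrightarrow{\sim} f^!\circ F^e_{Y\ast}$ through which you transport $f^!(\kappa_Y)$ --- does not exist outside the \'etale case, and this is a genuine gap rather than bookkeeping. In case (a), writing $f^!M=f^\ast M\otimes\omega_{X/Y}$, flat base change and the projection formula give $f^!(F^e_{Y\ast}M)\cong F^e_{X\ast}\bigl(f^\ast M\otimes F^{e\ast}\omega_{X/Y}\bigr)\cong F^e_{X\ast}\bigl(f^\ast M\otimes\omega_{X/Y}^{\otimes p^e}\bigr)$, whereas $F^e_{X\ast}f^!M=F^e_{X\ast}(f^\ast M\otimes\omega_{X/Y})$; so $\sigma_e$ would require an isomorphism $\omega_{X/Y}\cong\omega_{X/Y}^{\otimes p^e}$, which is neither canonical nor generally available. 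In case (b) your identification $\Hom(f_\ast\mathcal{O}_X,F^e_{Y\ast}M)\cong F^e_{Y\ast}\Hom(f_\ast\mathcal{O}_X,M)$ is false: the correct adjunction for the finite morphism $F^e_Y$ reads $\Hom_{\mathcal{O}_Y}(A,F^e_{Y\ast}M)\cong F^e_{Y\ast}\Hom_{\mathcal{O}_Y}(F^{e\ast}A,M)$, and $F^{e\ast}f_\ast\mathcal{O}_X\not\cong f_\ast\mathcal{O}_X$ (for $f\colon\Spec R/I\to\Spec R$ one gets $R/I^{[p^e]}$ versus $R/I$). A concrete check: for $f\colon\Spec k[x]/(x)\to\Spec k[x]$ and $M=k[x]$ one has $F_{X\ast}R^1f^!M\cong F_\ast(k[x]/(x))$ while $R^1f^!(F_{Y\ast}M)\cong F_\ast(k[x]/(x^{p}))$, so no $\sigma_1$ can exist. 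The discrepancy in both cases is exactly where the nontrivial input lives: the factor $x_1^{p^e-1}\cdots x_n^{p^e-1}$ in the complete intersection formula, and the Cartier operator on $\omega_{X/Y}$ (the exponents $\frac{t_i+1}{p^e}-1$) in the smooth case.

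The route the paper (following the cited reference) takes avoids this by working with the \emph{adjoint} structural map: by duality for the finite morphism $F^e$, the datum $\kappa\colon F^e_\ast M\to M$ is equivalent to $C\colon M\to F^{e!}M$, and upper shriek functors \emph{do} commute, $f^!F^{e!}\cong F^{e!}f^!$, by pseudofunctoriality applied to $f\circ F^e_X=F^e_Y\circ f$. One then applies $f^!$ to $C$, uses this commutation, and dualizes back to obtain $F^e_\ast f^!M\to f^!M$. Your cocycle condition, coherence, and (local) nilpotence arguments would go through essentially as you describe once the construction is repaired this way (nilpotence because the $n$-fold composite of the induced operators is the transport of $f^!$ applied to the $n$-fold composite on $M$, and $f^!$ in both cases commutes with the relevant filtered colimits); but as written the construction of the action itself is not available, except for \'etale $f$ where $\omega_{X/Y}=\mathcal{O}_X$ and your $\sigma_e$ coincides with the isomorphism $m\otimes s\mapsto m\otimes s^{p^e}$ used in the paper.
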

\begin{proof}
See \cite[Section 5]{blicklestaeblerfunctorialtestmodules}.
\end{proof}

It will be important later on to know how the Cartier structure is constructed in Theorem \ref{CartierShriek} so let us elaborate on this. It is sufficient to descibe this action locally, so we may pick an open affine $U \subseteq Y$ and an open affine $V \subseteq f^{-1}(U)$.

We may then view the action of a homogeneous element $\kappa \in \mathcal{C}_U$ of degree $e$ as a morphism $\kappa \in \Hom_{\mathcal{O}_Y(U)}(F_\ast^e M(U), M(U))$. In order to understand the action of $\mathcal{C}_X$ it suffices to describe to corresponding action of $\kappa \otimes 1$ as a morphism $\Hom_{\mathcal{O}_X(V)}(F_\ast^e f^!M(V), f^!M(V))$.
\begin{enumerate}[(a)]
\item{If $f$ is \'etale, then the morphism $\varphi: f^\ast F^e_\ast M \to F^e_\ast f^\ast M$ given on local sections by $m \otimes s \mapsto m \otimes s^{p^e}$ is an isomorphism. Now one defines the Cartier structure via \[\begin{xy} \xymatrix@1{F^e_\ast f^\ast M \ar[r]^{\varphi^{-1}}& f^\ast F_\ast^e M \ar[r]^{\kappa \otimes id} & f^\ast M.} \end{xy}\]
In the smooth case, we may locally factor $f$ as \[ \begin{xy} \xymatrix@1{\Spec S \ar[r]^{\varphi} &  \mathbb{A}^n_R \ar[r]^{g}& \Spec R }\end{xy}\] with $\varphi$ \'etale. Then the Cartier structure on $f^! M$ is the one induced from $\varphi^! g^! M$, where the Cartier structure on $g^! M= g^\ast M \otimes \omega_g$ is given by \[m \otimes x_1^{t_1} \cdots x_n^{t_n} dx \mapsto \kappa(m) \otimes x_1^{\frac{t_1 + 1}{p^e} -1} \cdots x_n^{\frac{t_n + 1}{p^e} -1} dx.\] Here $x_1, \ldots, x_n$ are local coordinates, $dx = dx_1 \wedge \cdots \wedge dx_n$ and if $r$ is not an integer, then $x_i^{r} = 0$  by convention.}
\item{Recall that $\bar{f}:(X, \mathcal{O}_X) \to (Y, f_\ast \mathcal{O}_X)$ is the canonical flat map of ringed spaces. By our assumption the Frobenius $F$ is a finite morphism. Hence, by duality for a finite morphism $\kappa: F_\ast^e M \to M$ corresponds to a map $M \to {F^e}^! M$. We then have an isomorphism $f^! {F^e}^! M \cong {F^e}^! f^! M$ which, by applying duality again, induces a morphism $F_\ast^e f^!M \to f^!M$. This is the action of $\kappa \otimes 1$.

The important case for us is, where $f$ is a complete intersection, say of relative dimension $n$. In this case only $R^n\Hom(f_\ast \mathcal{O}_Y, M)$ is non-zero. Specifically, if $f: \Spec S \to \Spec R$ and the corresponding regular sequence is given by $x_1, \ldots, x_n \in R$, then $R^n f^! M = M/(x_1, \ldots, x_n)M$ where the Cartier action of $\kappa \otimes 1$ is given by $m \mapsto \kappa x_1^{p^e-1} \cdots x_n^{p^e-1} m$. (see \cite[Example 3.3.12]{blickleboecklecartiercrystals} for details).}
\end{enumerate}

\begin{Theo}
Let $f: X \to Y$ be a morphism of finite type between $F$-finite schemes (e.g.\ an open immersion). Let $\mathcal{C}_Y$ be a Cartier algebra finitely generated\footnote{By this we mean that there is a finite covering by open affines $\Spec R_i$ of $Y$ such that $\mathcal{C}_Y\vert_{\Spec R_i}$ is a quotient of $R_i\{F_1, \ldots, F_n\}/R_i\langle F_j r^{p^{e_j}}  - r F_j  \, \vert \, r \in R \rangle$ for natural numbers $e_j$.} as an $\mathcal{O}_Y$-algebra and assume that there is an isomorphism $\varphi: \mathcal{C}_X \to f^\ast \mathcal{C}Y$. Then the functor $f_\ast$ on quasi-coherent modules induces a functor $f_\ast$ from quasi-coherent $\mathcal{C}_X$-modules to quasi-coherent $\mathcal{C}_Y$-modules. Moreover, this functor preserves (local) nilpotence and restricts to a functor from coherent $\mathcal{C}_X$-crystals to \emph{coherent} $\mathcal{C}_Y$-crystals.
\end{Theo}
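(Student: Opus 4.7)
The argument decomposes into three separate tasks: constructing the $\mathcal{C}_Y$-action on $f_\ast M$, verifying preservation of (local) nilpotence, and proving preservation of coherence at the level of crystals. Only the last is substantially difficult.

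For the first task, the key input is that the absolute Frobenius commutes with any morphism of schemes as a map of underlying topological spaces, producing a canonical identification $F_{Y,\ast}^e \circ f_\ast \cong f_\ast \circ F_{X,\ast}^e$ on quasi-coherent sheaves. A homogeneous element $\kappa \in (\mathcal{C}_Y)_e$ corresponds via the isomorphism $\mathcal{C}_X \cong f^\ast \mathcal{C}_Y$ to $1 \otimes \kappa \in (\mathcal{C}_X)_e$, whose action is an $\mathcal{O}_X$-linear map $F_{X,\ast}^e M \to M$; applying $f_\ast$ and transporting through the above identification yields the sought-after map $F_{Y,\ast}^e f_\ast M \to f_\ast M$. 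The Cartier bimodule axiom $r\kappa = \kappa r^{p^e}$ and the multiplicativity of the action in $\kappa$ then follow from the corresponding axioms for $\mathcal{C}_X$ acting on $M$ combined with the construction of $f^\ast \mathcal{C}_Y$ as a Cartier algebra.

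For preservation of nilpotence, suppose $(\mathcal{C}_{X,+})^n M = 0$. Any local section of $(\mathcal{C}_{Y,+})^n$ is a finite sum of $n$-fold products of positive-degree elements, and each such product acts on $f_\ast M$ through the zero action of the corresponding product $(1 \otimes \kappa_1) \cdots (1 \otimes \kappa_n) \in (\mathcal{C}_{X,+})^n$ on $M$; hence $(\mathcal{C}_{Y,+})^n f_\ast M = 0$. Local nilpotence then follows by writing $M = \bigcup_e M_e$ with each $M_e$ nilpotent and using that $f_\ast$ commutes with filtered colimits of quasi-coherent sheaves, which holds because $f$ is quasi-compact and quasi-separated, being of finite type between noetherian schemes.

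The main obstacle is preservation of coherence on crystals: given coherent $M$ on $X$, the module $f_\ast M$ is quasi-coherent but typically fails to be $\mathcal{O}_Y$-coherent, so one must exhibit a coherent Cartier submodule $N \subseteq f_\ast M$ whose inclusion is a nil-isomorphism. The plan is to factor $f$, locally on $Y$ via Nagata compactification, as $\bar f \circ j$ with $j$ an open immersion and $\bar f$ proper, and treat each piece separately. The proper case is immediate since $\bar f_\ast$ preserves $\mathcal{O}$-coherence. For an open immersion $j: U \hookrightarrow X$ with coherent Cartier $M$ on $U$, one picks a coherent $\mathcal{O}_X$-submodule $N_0 \subseteq j_\ast M$ extending $M$ and enlarges it to the Cartier submodule generated by $N_0$ under the finitely many generators of $\mathcal{C}_X$. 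The technical heart of the proof is that this generation may fail to stabilize at a coherent submodule; one must instead combine noetherianity of $\mathcal{O}_X$ with structural results on coherent Cartier modules supported on the proper closed subscheme $X \setminus U$ to extract a coherent Cartier submodule $N \subseteq j_\ast M$ satisfying $N|_U = M$ and with $j_\ast M/N$ locally nilpotent.
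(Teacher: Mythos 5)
Your construction of the $\mathcal{C}_Y$-action and your nilpotence argument are correct and agree with how the cited sources (and the paragraph following the theorem in this paper) proceed: the action comes from the graded morphism of Cartier algebras $\mathcal{C}_Y \to f_\ast f^\ast \mathcal{C}_Y \cong f_\ast \mathcal{C}_X$, so a product of $n$ positive-degree sections of $\mathcal{C}_Y$ acts on $f_\ast M$ through a section of $(\mathcal{C}_{X,+})^n$, and local nilpotence follows since $f$ is quasi-compact and quasi-separated, so $f_\ast$ commutes with the filtered union $M = \bigcup_e M_e$. (Be aware that the paper itself gives no argument here beyond a citation, so the question is whether your sketch could stand in for the cited proofs.)

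It cannot, because the third part --- the only part of the theorem with real content --- is left open exactly where the difficulty lies. The reduction via Nagata compactification to the case of an open immersion $j: U \to X$ is the right strategy, and the proper case is indeed immediate. But for the open immersion you only assert that ``one must combine noetherianity with structural results on coherent Cartier modules supported on $X \setminus U$'' to produce a coherent Cartier submodule $N \subseteq j_\ast M$ with $N\vert_U = M$ and $j_\ast M / N$ locally nilpotent; you neither identify those structural results nor explain why they force your generation process $N_0 \subseteq N_0 + \mathcal{C}_+ N_0 \subseteq N_0 + \mathcal{C}_+ N_0 + (\mathcal{C}_+)^2 N_0 \subseteq \cdots$ to terminate up to nil-isomorphism. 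The missing ingredient is the main finiteness theorem of Blickle and B\"ockle for coherent Cartier modules (stabilization of the descending chain of images $(\mathcal{C}_+)^e N$, equivalently the strong noetherian/artinian properties of the category of coherent Cartier crystals); this theorem is the entire substance of the claim that $j_\ast$ preserves crystal-coherence, and it is a genuinely nontrivial result, not a routine consequence of $\mathcal{O}_X$ being noetherian. As written, your proposal reproduces the formal bookkeeping but treats the one theorem that actually needs proving as a black box, so it does not constitute a proof.
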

\begin{proof}
See  \cite[Section 2.3 and Theorem 3.2.14]{blickleboecklecartiercrystals} for the case that $\mathcal{C}_Y$ is principally generated and \cite[Theorem 7.10]{blicklestaeblerfunctorialtestmodules} for the general case.
\end{proof}

Let us again explain how the Cartier structure looks like. The natural map $\mathcal{C}_Y \to f_\ast f^\ast \mathcal{C}_Y$ is a morphism of Cartier algebras (see \cite[Lemma 5.11]{blicklestaeblerfunctorialtestmodules}. Thus, having fixed some isomorphism $\mathcal{C}_X \cong f^\ast \mathcal{C}_Y$ as above, we obtain an action of $\mathcal{C}_Y$ on $f_\ast M$.

We now come to the notion of intermediate extension in the category of Cartier crystals.

\begin{Def}
\label{IntermediateExtensionDef}
Let $X$ be an $F$-finite scheme and $j: U \to X$ a locally closed immersion. Given a Cartier crystal $M$ on $U$ we define the \emph{intermediate extension} $j_{!\ast} M$ (of $M$ along $j$) to be the smallest subcrystal $N$ of $j_\ast M$ for which $j^! N = j^! j_\ast M = M$.
\end{Def}

Note that we may factor $j$ as $f \circ g$ with $f$ a closed immersion and $g$ an open immersion. If $X$ is embeddable into a smooth scheme and the Cartier structure is principal, then we have $j^! j_\ast \cong (fg)^! (fg)_\ast \cong g^! f^! f_\ast g_\ast \cong g^! g_\ast$, where we use \cite[Theorem 5.12 (c)]{schedlmeiercartierpervers} and \cite[Theorem 4.1.2]{blickleboecklecartiercrystals}.

The intermediate extension always exists for open immersions (see \cite[Theorem 6.13]{schedlmeiercartierpervers} for a proof in the case that $\mathcal{C} = \mathcal{O}_X\langle\kappa\rangle$, the general case being similar). We prove existence in the case $\mathcal{C} = \mathcal{O}_X\langle\kappa\rangle$ for a locally closed immersion in Lemma \ref{LclosedIEOpenIE} below.

Next we discuss two notions of derived categories for Cartier crystals.

\begin{Def}
Let $X$ be an $F$-finite scheme and $\mathcal{C}_X = \mathcal{O}_X\langle \kappa \rangle$.
\begin{enumerate}[(a)]
\item{We denote the \emph{bounded derived category of quasi-coherent Cartier crystals with cohomology in coherent Cartier crystals} by $D^b_{crys}(QCrys(X))$ or simply by $D^b_{crys}(X)$.}
\item{We denote the \emph{bounded derived category of coherent crystals} by $D^b(Crys(X))$.}
\end{enumerate}
\end{Def}

\begin{Prop}
\label{CrystalsDerivedCatEquivalent}
If $X$ is $F$-finite and regular or embeddable, then the natural functor $ D^b(Crys(X)) \to D^b_{crys}(X)$ is an equivalence of categories.
\end{Prop}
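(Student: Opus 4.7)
The plan is to apply the standard abstract criterion, analogous to the classical equivalence $D^b(\Coh(X)) \xrightarrow{\sim} D^b_{coh}(\QCoh(X))$ for noetherian schemes, which asserts that the functor $D^b(Crys(X)) \to D^b_{crys}(QCrys(X))$ is an equivalence whenever $Crys(X)$ sits inside $QCrys(X)$ as a Serre subcategory and every quasi-coherent Cartier crystal is a filtered colimit of its coherent Cartier subcrystals. Once these two structural inputs are available, the derived equivalence follows by a standard d\'evissage argument: essential surjectivity is proved by induction on the cohomological amplitude of a bounded complex $M^\bullet$, replacing its top cohomology by a coherent Cartier subcrystal that lifts it and then truncating, while fully faithfulness reduces to the assertion that $\Ext$-groups computed in $Crys(X)$ agree with those computed in $QCrys(X)$.

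The first step is to verify that $Crys(X) \hookrightarrow QCrys(X)$ is closed under subobjects, quotients, and extensions. Since morphisms in the crystal category are represented by roofs $M \xleftarrow{\beta} P \to N$ with $\beta$ a nil-isomorphism, this reduces to the analogous assertion at the level of Cartier modules, which in turn follows from the fact that $\Coh(X) \subset \QCoh(X)$ is a Serre subcategory on a noetherian scheme, combined with the compatibility of local nilpotence with subquotients recorded after Definition \ref{Deflocnilpotent}.

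The second step, the approximation property, is the more delicate part: any quasi-coherent Cartier crystal should be a filtered colimit in $QCrys(X)$ of coherent Cartier subcrystals. At the level of Cartier modules one writes the underlying quasi-coherent $\mathcal{O}_X$-module as the filtered union of its coherent $\mathcal{O}_X$-submodules and then closes each such submodule $N$ under the Cartier action by passing to $N + \kappa(F_\ast N) + \kappa^2(F^2_\ast N) + \cdots$, arguing that this sum is coherent up to nil-isomorphism and that the resulting directed system recovers the original module in the localized category. One then descends to $QCrys(X)$, using that filtered colimits there are computed from the underlying module colimits modulo nil-isomorphism.

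The principal obstacle lies in handling the localization at nil-isomorphisms carefully, since ``subobjects'' and ``filtered colimits'' in $QCrys(X)$ do not lift canonically to the level of Cartier modules. The hypothesis that $X$ is regular or embeddable enters here in two ways: via Theorem \ref{CartierShriek} it supplies the functors $i_\ast$ and $i^!$ for a closed immersion $i \colon X \hookrightarrow X'$ into a smooth ambient scheme, needed to invoke a Kashiwara-type equivalence reducing the embeddable case to the smooth case where the structure of Cartier crystals is best understood; and it guarantees the existence of sufficiently many acyclic objects in $QCrys(X)$ with which the relevant $\Ext$-comparison underlying fully faithfulness can be carried out.
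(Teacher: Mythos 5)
First, a point of comparison: the paper does not actually prove this proposition; it cites \cite{blickleboecklecartierduality} for the regular case and \cite{schedlmeiercartierpervers} for the embeddable case, so your argument has to stand entirely on its own. It does not, because the approximation property on which your whole d\'evissage rests is false. You claim that every quasi-coherent Cartier crystal is a filtered colimit of its coherent subcrystals, obtained by closing a coherent $\mathcal{O}_X$-submodule $N$ under the Cartier action via $N + \kappa(F_\ast N) + \kappa^2(F^2_\ast N) + \cdots$ and ``arguing that this sum is coherent up to nil-isomorphism.'' Take $X = \Spec k$ with $k$ perfect and $M = \bigoplus_{i \geq 0} k\, e_i$ with $\kappa(\sum_i a_i e_i) = \sum_i a_i^{1/p} e_{i+1}$. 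This is a quasi-coherent Cartier module generated by the coherent submodule $k e_0$, and $\kappa$ is injective on $M$, so $M$ has no nonzero nilpotent submodules; in particular $M$ is not locally nilpotent, i.e.\ $M \neq 0$ in $QCrys(\Spec k)$. On the other hand, for any nonzero $v \in M$ the iterates $\kappa^j v$ have strictly increasing top index and are therefore linearly independent, so every nonzero $\kappa$-stable subspace of $M$ is infinite-dimensional with $\kappa$ injective on it, and such a module cannot be nil-isomorphic to a coherent one (any representative of an isomorphism in the quotient category would force an injection of an infinite-dimensional space into a finite-dimensional one). Hence the Cartier closure of $k e_0$ is not coherent up to nil-isomorphism, the only coherent subcrystal of $M$ is $0$, and $M$ is not a filtered colimit of coherent subcrystals. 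This is fatal both to your essential-surjectivity step (you cannot in general find a coherent subcrystal of $\ker(M^n \to M^{n+1})$ still covering the coherent top cohomology) and to the $\Ext$-comparison, for which you offer no independent argument.

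The proposition is nevertheless true, but the proofs in the cited sources cannot and do not proceed by exhibiting quasi-coherent crystals as unions of coherent subcrystals; they rely on the specific finiteness theory of Cartier modules (for instance the stabilization of the descending chain of images $\kappa^e(F^e_\ast M)$ for $M$ coherent and the resulting strong finiteness properties of coherent crystals) and, in the embeddable case, on a Kashiwara-type reduction to the ambient smooth scheme. Your instinct that the localization at nil-isomorphisms is the delicate point is correct, and your remark about reducing the embeddable case to the smooth case via $i_\ast$ and $i^!$ is in the right spirit, but the mechanism you propose to handle the delicate point is exactly the step that fails.
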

\begin{proof}
See \cite{blickleboecklecartierduality} for the regular case and \cite{schedlmeiercartierpervers} for the embeddable case.
\end{proof}

Assume that $\mathcal{C}$ is of the form $\mathcal{O}_X\langle \kappa \rangle$ with $\kappa$ of degree $1$. Then we may equivalently view a coherent Cartier module as a coherent $\mathcal{O}_X$-module endowed with a map $F_\ast M \to M$. If $X$ is $F$-finite and embeddable, then the category of Cartier crystals on $X$ is anti-equivalent to the category of perverse constructible $\mathbb{F}_p$-sheaves via the intermediate category of locally finitely generated unit $F$-modules. Thus we review this category next.

\subsection{unit $F$-modules}
For an introduction to unit $F$-modules see \cite{emertonkisinintrorhunitfcrys}, a more elaborate treatment may be found in \cite{emertonkisinrhunitfcrys}. For the embeddable case see \cite{schedlmeiercartierpervers}.
Our naming convention will differ from \cite{emertonkisinrhunitfcrys} in that we will only consider unit $F$-modules that are locally finitely generated in this article and refer to them simply as \emph{unit $F$-modules}.

\begin{Def}
\begin{enumerate}[(a)]
\item{Let $X$ be a smooth scheme over a field $k$.  Let $M$ be a quasi-coherent $\mathcal{O}_X$-module and  $\Phi: F^\ast M \to M$ an isomorphism. Consider a \emph{coherent} $\mathcal{O}_X$-module $N$ together with an injective map $\varphi: N \to F^\ast N$. We can then consider $A =  \colim_{e \geq 0} F^{e\ast} N$, where the direct system is given by the composition of appropriate $F^{e\ast} \varphi$. The map $\varphi$ induces an isomorphism $\Phi_A: F^\ast A \to A$. We call $(N, \varphi)$ a \emph{root} of $M$ if there exists an isomorphism $\alpha: A \to M$ that is compatible with $\Phi_A$ and $\Phi$, i.e.\ $\alpha \circ \Phi_A = \Phi \circ F^\ast \alpha$.}
\item{We can now define the \emph{category of unit $F$-modules} on $X$ (we will denote this by \emph{unit $F(X)$}). Its objects are pairs $(M, \Phi)$, where $M$ is a quasi-coherent $\mathcal{O}_X$-module and $\Phi: F^\ast M \to M$ is an isomorphism. Further, we require that $(M, \Phi)$ admits a root. Morphisms $\alpha: (M, \Phi) \to (N, \Psi)$ are simply morphisms of the underlying quasi-coherent modules that are compatible with $\Phi$ and $\Psi$, i.e.\ the following diagram is commutative: \[\begin{xy} \xymatrix{F^\ast M \ar[r]^\Phi \ar[d]^{F^\ast \alpha} & M \ar[d]^\alpha\\ F^\ast N \ar[r]^\Psi & N}\end{xy}\]}
\end{enumerate}
\end{Def}

Unit $F$-modules are special cases of left $\mathcal{D}_X$-modules and come equipped with functors $f_+$ and $f^!$ for any morphism $f:X \to Y$ between smooth schemes (see \cite[\S \S 2 and 3]{emertonkisinrhunitfcrys}. The functor $f_+$ corresponds to the pushforward on the underlying $\mathcal{D}_X$-module while $f^!$ corresponds to $f^\ast$ on the underyling quasi-coherent $\mathcal{O}_X$-module (see \cite[2.3.3]{emertonkisinrhunitfcrys}). 

We can use $f^!$ to define a notion of unit $F$-modules for embeddable schemes:

\begin{Def}
If $i: X \to X'$ is a closed immersion with $X'$ smooth and $X$ arbitrary, $j: U \to X'$ the complement, then we define the \emph{category of  unit $F$-modules on $X$} (again denoted by \emph{unit $F(X)$}) as the full subcategory of unit $F$-modules on $X'$ such that for each object $M$ we have $j^!M= 0$. It is shown in \cite[Corollary 4.11]{schedlmeiercartierpervers} that this is independent of the embedding.
\end{Def}

Similar as in the case of Cartier crystals it makes more sense to take the derived category of a larger class of $F$-modules and then restrict cohomology. Namely, an \emph{$F$-module} is a pair $(M, \Phi)$, where $M$ is an $\mathcal{O}_X$ module and $\Phi: F^\ast M \to M$ an $\mathcal{O}_X$-linear map.

\begin{Def}
Let $X$ be an embeddable scheme.
\begin{enumerate}[(a)]
\item{We denote the bounded derived category of $F$-modules with cohomology in unit $F$-modules by $D^b_{unit\, F}(X)$.}
\item{We denote the bounded derived category of unit $F$-modules by $D^b(unit\, F(X))$}
\end{enumerate}
\end{Def}

\begin{Prop}
\label{unitFDerivedCatEquivalent}
Let $X$ be an $F$-finite embeddable scheme. Then the natural functor $D^b(unit\, F(X)) \to D^b_{unit\, F}(X)$ is an equivalence.
\end{Prop}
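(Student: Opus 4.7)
The plan is to deduce this from the analogous statement for Cartier crystals (Proposition \ref{CrystalsDerivedCatEquivalent}) via the Blickle-B\"ockle-Schedlmeier equivalence $\Sigma: Crys(X) \xrightarrow{\sim} unit\,F(X)$. The strategy is to assemble a commutative square of functors
\[
\begin{xy} \xymatrix{
D^b(Crys(X)) \ar[r]^{\Sigma} \ar[d] & D^b(unit\,F(X)) \ar[d] \\
D^b_{crys}(X) \ar[r]^{\tilde{\Sigma}} & D^b_{unit\,F}(X)
} \end{xy}
\]
in which both horizontal arrows are equivalences. Since Proposition \ref{CrystalsDerivedCatEquivalent} says the left vertical arrow is an equivalence, the right vertical arrow will also be one.

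First I would extend $\Sigma$ to a functor $\tilde{\Sigma}$ from quasi-coherent Cartier modules to $F$-modules via the colimit $N \mapsto \colim F^{e\ast} N$, where the transition maps are built from the Cartier structure by adjunction. This construction goes through verbatim in the quasi-coherent setting, is exact, and sends locally nilpotent Cartier modules to $F$-modules whose stabilization is zero; hence $\tilde{\Sigma}$ descends to the localized category of quasi-coherent Cartier crystals. Conversely, a quasi-inverse is built by sending an $F$-module to an appropriate quasi-coherent Cartier module via the adjunction $(F^\ast, F_\ast)$. Checking that this pair induces an equivalence of abelian categories, together with the fact that $\tilde{\Sigma}$ restricts to $\Sigma$ on coherent/unit subcategories, yields the commutativity of the square and the fact that the top horizontal arrow is an equivalence. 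The bottom arrow being an equivalence then follows from the abelian equivalence by passage to bounded derived categories.

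The main obstacle is establishing that $\tilde{\Sigma}$ is indeed an equivalence between the enlarged categories (equivalently, that the square above commutes at the derived level with both horizontal arrows equivalences). This requires a careful compatibility check between the root structure used to define unit $F$-modules and the adjoint description of Cartier modules in the quasi-coherent setting, together with the verification that local nilpotence on the Cartier side corresponds to vanishing of the asymptotic $F$-module structure. In the embeddable case, one fixes a closed immersion $i: X \to X'$ and reduces both sides to the smooth case $X'$ via the full subcategories of objects set-theoretically supported on $X$; this is where Schedlmeier's extension of the equivalence and the compatibility of $i_+$ and $i^!$ with $\Sigma$ enter, while the smooth case can be handled using the derived equivalence for Cartier crystals proven in \cite{blickleboecklecartiercrystals}.
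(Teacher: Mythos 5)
There is a genuine gap here, and it is a circularity. Your square has four arrows, and you want to deduce that the right vertical one is an equivalence from the other three. The left vertical arrow is Proposition \ref{CrystalsDerivedCatEquivalent}, and the top arrow is fine (an equivalence of abelian categories $Crys(X) \simeq unit\,F(X)$ formally induces an equivalence of bounded derived categories). The problem is the bottom arrow. The derived equivalence $D^b_{crys}(X) \to D^b_{unit\,F}(X)$ of Theorem \ref{CartierunitFequivalence} is, in the references (Blickle--B\"ockle, Schedlmeier), \emph{constructed} as the composite $D^b_{crys}(X) \simeq D^b(Crys(X)) \simeq D^b(unit\,F(X)) \simeq D^b_{unit\,F}(X)$ --- i.e.\ it presupposes exactly the statement you are trying to prove. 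Full faithfulness of any $t$-exact functor $D^b_{crys}(X) \to D^b_{unit\,F}(X)$ that is an equivalence on hearts amounts to identifying $\Ext$-groups computed in the small hearts with Hom-spaces in the ambient derived categories, which is precisely the content of the two propositions. So you cannot use the bottom arrow as an input.

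Your fallback --- that $\tilde\Sigma$ extends to an equivalence between the \emph{ambient} abelian categories (quasi-coherent Cartier crystals and all $F$-modules), which would make the bottom arrow an equivalence for formal reasons --- is not available. The ambient categories are genuinely different: an arbitrary $F$-module is a pair $(M,\Phi)$ with $\Phi: F^\ast M \to M$ any $\mathcal{O}_X$-linear map, whose adjoint is a map $M \to F_\ast M$, i.e.\ the \emph{opposite} direction to a Cartier structure; there is no quasi-inverse ``via the adjunction $(F^\ast, F_\ast)$'', and the colimit construction $\tilde\Sigma$ only ever produces objects with invertible structural map, so it cannot be essentially surjective onto all $F$-modules. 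The actual proof of the proposition (Emerton--Kisin \S 17 in the smooth case, Schedlmeier in the embeddable case, which is what the paper cites) is a direct argument on the unit $F$-module side: one uses the existence of roots to build resolutions by induced unit $F$-modules and compares $\Ext$-groups in $unit\,F(X)$ with Hom-spaces in $D^b_{unit\,F}(X)$. It is an independent theorem parallel to Proposition \ref{CrystalsDerivedCatEquivalent}, not a consequence of it.
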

\begin{proof}
See \cite[\S \S 17]{emertonkisinrhunitfcrys} for the smooth case and \cite{schedlmeiercartierpervers} for the embeddable case.
\end{proof}

\begin{Bla}
\label{SigmaConstruction}
Let $X$ be a smooth $F$-finite scheme. We construct a functor $\Sigma_X$ from coherent $\mathcal{O}_X\langle \kappa \rangle$-moduless on $X$ to unit $F$-modules on $X$ as follows. Let $(M, \kappa)$ be a coherent Cartier module. By duality for a finite morphism the Cartier structure $\kappa: F_\ast M \to M$ corresponds to a morphism $C: M \to F^!M$. Taking the direct limit over the iterates of this map we obtain $\mathcal{M} = \colim_{e \geq 0} F^{e!} M$. Moreover, $C$ induces an isomorphism $\Psi: \mathcal{M} \to F^! \mathcal{M}$. Tensoring with $\omega_X^{-1}$ and using the fact that one has an isomorphism $F^! \mathcal{M} \otimes \omega^{-1} \cong F^\ast \mathcal{M}$ one obtains a unit $F$-module structure on $\mathcal{M} \otimes \omega_X^{-1}$ via $\Psi^{-1} \otimes \id_{\omega_X^{-1}}$. This yields a functor $\Sigma_X$ from coherent Cartier modules on $X$  to unit $F$ modules on $X$ for any $F$-finite smooth scheme $X$.
\end{Bla}

\begin{Theo}
\label{CartierunitFequivalence}
Let $X$ be an $F$-finite smooth scheme. The functor $\Sigma_X$ constructed in \ref{SigmaConstruction} induces an equivalence between $\mathcal{O}_X\langle \kappa \rangle$-crystals on $X$ and unit $F$-modules on $X$. Moreover, $\Sigma_X$ extends to give an equivalence of derived categories $\Sigma_X: D^b_{crys}(X) \to D^b_{unit\, F}(X)$. This functor can be extended to induce a derived equivalence $\Sigma_X: D^b_{crys}(X) \to D^b_{unit\, F}(X)$ for any $F$-finite embeddable scheme $X$.
\end{Theo}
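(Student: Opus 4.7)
The plan splits into three parts, matching the three assertions of the theorem: the abelian equivalence for smooth $X$, its derived upgrade, and the extension to embeddable $X$.

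First, for smooth $X$, I would verify that $\Sigma_X$ kills nilpotent Cartier modules, so that it descends to the localized category of Cartier crystals. Indeed, the Cartier structure $\kappa: F_\ast M \to M$ corresponds via duality for the finite morphism $F$ to a map $C: M \to F^! M$, and nilpotence of $\kappa$ translates into $C^e = 0$ for $e \gg 0$, whence the colimit $\mathcal{M} = \colim_{e \geq 0} F^{e!} M$ vanishes. A quasi-inverse is then constructed via roots: a unit $F$-module $(M, \Phi)$ admits a (non-canonical) root $(N, \varphi)$, and via duality together with the twist by $\omega_X$ the map $\varphi: N \to F^\ast N$ produces a Cartier structure on $N \otimes \omega_X$. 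Two different roots yield nil-isomorphic Cartier modules, so this descends to a well-defined functor on Cartier crystals which is then checked to be inverse to $\Sigma_X$. The detailed verifications are due to Blickle and B\"ockle \cite{blickleboecklecartierfiniteness}.

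For the derived upgrade, still in the smooth case, Propositions \ref{CrystalsDerivedCatEquivalent} and \ref{unitFDerivedCatEquivalent} reduce the claim to showing that $\Sigma_X$ induces an equivalence $D^b(Crys(X)) \to D^b(unit\, F(X))$. Since $\Sigma_X$ is exact between abelian categories (its construction only involves colimits of iterated $F^!$-pullbacks of the injective transition maps), the abelian equivalence lifts formally to a triangulated equivalence on bounded derived categories.

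For embeddable $X$, I would fix a closed embedding $i: X \to X'$ with $X'$ smooth and let $j: U \to X'$ denote the complementary open immersion. On the Cartier side, the functor $i_\ast$ identifies Cartier crystals on $X$ with the full subcategory of Cartier crystals on $X'$ supported on $X$; on the unit $F$ side, the category is defined as the full subcategory of unit $F$-modules on $X'$ annihilated by $j^!$. I would then verify that $\Sigma_{X'}$ respects these subcategories --- concretely, that $\Sigma_{X'}(i_\ast M)$ is annihilated by $j^!$ (since on the Cartier side $j^! i_\ast = 0$ and $\Sigma$ is compatible with these functors in the appropriate sense) --- and that every unit $F$-module with $j^! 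M = 0$ arises from a Cartier crystal on $X$. The main obstacle is essential surjectivity together with independence of the chosen embedding $i$; these are precisely the issues treated by Schedlmeier, so the argument amounts to invoking the results of \cite{schedlmeiercartierpervers}.
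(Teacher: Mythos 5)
Your proposal is correct and follows essentially the same route as the paper, which simply cites \cite[Theorem 5.15]{blickleboecklecartierfiniteness} for the abelian equivalence in the smooth case, upgrades it to derived categories via Propositions \ref{CrystalsDerivedCatEquivalent} and \ref{unitFDerivedCatEquivalent}, and defines the embeddable-case equivalence as $\Sigma_{X'} \circ i_\ast$ with independence of the embedding supplied by \cite{schedlmeiercartierpervers}. Your sketch merely unpacks what those citations contain (vanishing of the colimit on nilpotents, quasi-inverse via roots, Kashiwara-type identification of the supported subcategories), so there is nothing to add.
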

\begin{proof}
See \cite[Theorem 5.15]{blickleboecklecartierfiniteness} for the fact that $\Sigma_X$ descends to crystals and induces an equivalence of abelian categories (note that the isomorphism $\omega_X \cong F^!\omega_X$ is induced by the choice of an isomorphism $k \to F^! k$ and that \lq\lq{}Cartier modules\rq\rq{} should read \lq\lq{}Cartier crystals\rq\rq{}).

This generalizes to a derived equivalence of embeddable schemes by \cite[Theorem 5.12]{schedlmeiercartierpervers}. More precisely, if $Z$ is a scheme and $i: Z \to X$ a closed immersion with $X$ smooth then the equivalence is given by $\Sigma_X \circ i_\ast$, where $\Sigma_X$ is the derived equivalence in the smooth case. This does not depend on the chosen embedding by \cite[Proposition 5.4]{schedlmeiercartierpervers}.
\end{proof}

Let us denote by $D^b_c(X)$ the bounded derived category of $\mathbb{F}_p$-sheaves with constructible cohomology. Then in \cite{emertonkisinrhunitfcrys} a functor \[\Sol: D^b_{unit \, F}(X) \to D^b_c(X)\] is constructed. One has:

\begin{Theo}[Riemann-Hilbert correspondence]
\label{unitFConstructibleAntiequivalence}
Let $X$ be a smooth scheme. The functor $\Sol: D^b_{unit \, F}(X) \to D^b_c(X)$ is an anti-equivalence of categories. Moreover one has:
\begin{enumerate}[(a)]
\item{If $f: X \to Y$ is a morphism between smooth schemes, then there is a natural isomorphism $\Sol \circ f^! \cong f^{-1} \circ \Sol$.}%$f^!$ on $D^b_{unit \, F}(X)$corresponds to $f^{-1}$ on $D^b_c(X)$.}
\item{If $f: X \to Y$ is a morphism between smooth schemes, and $f = gh$ with $g$ an immersion and $h$ smooth and proper, then there is a natural isomorphism $\Sol \circ f_+ \cong f_! \circ \Sol$.} %$f_+$ on $D^b_{unit \, F}(X)$ corresponds to $f_!$ on $D^b_c(X)$.}
\end{enumerate}
Finally, if $X$ is only embeddable, then $\Sol$ extends to an equivalence of categories $\Sol: D^b_{unit\, F}(X) \to D^b_c(X)$. Moreover, $f_! \circ \Sol \cong \Sol \circ f_+$ and $\Sol \circ f^{!} \cong f^{-1} \circ \Sol$ for a locally closed immersion $f$.
\end{Theo}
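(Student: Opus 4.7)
The plan is to treat the smooth case first (essentially reconstructing the Emerton--Kisin argument) and then bootstrap to embeddable schemes via a closed immersion into a smooth ambient scheme. For smooth $X$, one defines $\Sol(M) := R\Hom_{\mathcal{O}_X[F]}(M, \mathcal{O}_{X_{\acute{e}t}})[\dim X]$, or equivalently the shifted two-term complex $[1 - \Phi]$ on the \'etale site; by Artin--Schreier theory this lands in $D^b_c(X)$. The first step is to prove that $\Sol$ is an anti-equivalence at the abelian level by exhibiting a quasi-inverse sending a constructible $\mathbb{F}_p$-sheaf $\mathcal{F}$ to a suitable descent of $(\mathcal{F} \otimes_{\mathbb{F}_p} \mathcal{O}_{X_{\acute{e}t}})^{F=1}$; both directions are controlled by the root of a unit $F$-module, which reduces checks to the coherent $\mathcal{O}_X$-module level. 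Passage to derived categories then combines Proposition \ref{unitFDerivedCatEquivalent} with standard constructible resolutions.

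For compatibility (a) with $f^!$, since on smooth $f$ one has $f^! M = f^\ast M \otimes \omega_{X/Y}$ (the $\omega_{X/Y}$ being absorbed into a shift), the identity reduces to the base-change $f^{-1} R\Hom(M, \mathcal{O}_{Y_{\acute{e}t}}) \cong R\Hom(f^\ast M, \mathcal{O}_{X_{\acute{e}t}})$ for \'etale structure sheaves, which is standard. The closed-immersion case (needed for the embeddable extension) reduces to a local cohomology computation that can again be carried out on roots.

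For compatibility (b) with $f_+$, factor $f = gh$ with $h$ smooth and proper and $g$ an immersion. For $h$ one matches proper base change on the \'etale side against Grothendieck-type duality on the $F$-module side ($h_+$ agreeing with the derived pushforward). For $g$ a locally closed immersion, $g_+ \leftrightarrow g_!$ reduces to the open and closed cases separately; each is verified directly from the definitions via roots, the closed case being an extension-by-zero computation and the open case using that the $F$-module $j_+$ is forced to be exact by the unit condition.

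For the embeddable extension, given $i: X \to X'$ closed with $X'$ smooth and $j: U \to X'$ the open complement, the full subcategory of unit $F$-modules with $j^! M = 0$ is carried by $\Sol$ onto the full subcategory of $D^b_c(X')$ supported on $X$, which is equivalent to $D^b_c(X)$ via $i^{-1}$. Independence of the chosen embedding is checked by comparing $i_1: X \to X_1'$ and $i_2: X \to X_2'$ through the diagonal $X \to X_1' \times X_2'$ and invoking (a) along the smooth projections. The main obstacle is part (b) for the non-proper piece: for an open immersion $j$ the identification $\Sol \circ j_+ \cong j_! \circ \Sol$ genuinely uses the unit structure (since $j_!$ on $\mathbb{F}_p$-sheaves is exact but the analog on general $\mathcal{O}$-modules is not well behaved), and upgrading the abelian statement to the derived level requires tracking the compatibility of roots with $j$ through the equivalences of Proposition \ref{unitFDerivedCatEquivalent} and Theorem \ref{CartierunitFequivalence} rather carefully.
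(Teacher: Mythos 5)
The first thing to say is that the paper does not prove this theorem at all: its ``proof'' consists of citations to \cite[Theorem 11.3]{emertonkisinrhunitfcrys} for the smooth case and to \cite[Theorem 5.12]{schedlmeiercartierpervers} for the embeddable extension and the compatibility with locally closed immersions. Your proposal instead tries to reconstruct the content of those references. As an outline of the Emerton--Kisin strategy (Artin--Schreier complex, reduction to roots, factorization of $f$ into an immersion followed by a smooth proper map, passage to embeddable schemes via the complement of a closed immersion and independence of embedding via the diagonal) it has the right overall shape, and you correctly single out the open-immersion case of (b) as the genuinely delicate point. But it cannot stand as a proof: the statement being cited is the main theorem of a long memoir, and several of the steps you dismiss as ``standard'' or ``verified directly from the definitions'' are precisely the hard theorems of that memoir.

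Two concrete problems. First, your candidate quasi-inverse is wrong as written: for a constructible $\mathbb{F}_p$-sheaf $\mathcal{F}$, Artin--Schreier theory gives $(\mathcal{F}\otimes_{\mathbb{F}_p}\mathcal{O}_{X_{\acute{e}t}})^{F=1}\cong\mathcal{F}$, so taking Frobenius invariants returns $\mathcal{F}$ rather than producing a unit $F$-module; moreover $\Sol$ is contravariant, so its quasi-inverse must be contravariant too (in \cite{emertonkisinrhunitfcrys} it is built from $R\Hom_{\mathbb{F}_p}(\mathcal{F},\mathcal{O}_{X_{\acute{e}t}})$ pushed to the Zariski site, not from invariants of a tensor product). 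Second, the assertion in (a) that $f^{-1}R\Hom(M,\mathcal{O}_{Y_{\acute{e}t}})\cong R\Hom(f^{\ast}M,\mathcal{O}_{X_{\acute{e}t}})$ is ``standard'' base change fails as stated: a locally finitely generated unit $F$-module $M$ is only quasi-coherent (a colimit of Frobenius pullbacks of its root), so the usual compatibility of $R\Hom$ with flat pullback does not apply directly; one must descend to the root and use the $\mathcal{O}_{F,X}$-module structure, which is where the actual work in \cite{emertonkisinrhunitfcrys} lies. Similarly, that $\Sol$ lands in $D^b_c(X)$ is not ``by Artin--Schreier theory'' alone but is the main finiteness theorem for unit $F$-modules, and the existence and exactness properties of $j_+$ for an open immersion is itself a nontrivial theorem there. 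If your goal is to match what this paper does, the correct move is simply to invoke the two references; if your goal is to actually prove the theorem, each of the flagged steps needs to be expanded into the corresponding argument from \cite{emertonkisinrhunitfcrys} and \cite{schedlmeiercartierpervers}.
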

\begin{proof}
For the fact that $\Sol$ is an equivalence as well as (a) and (b) see \cite[Theorem 11.3]{emertonkisinrhunitfcrys}. For the case of embeddable schemes and compatibility with locally closed immersions  see \cite[Theorem 5.12]{schedlmeiercartierpervers}.
\end{proof}

We will show in Section \ref{EquivalenceShriekPullbackCompatible} that $\Sol \circ f^! \cong f^{-1} \circ \Sol$ for a smooth morphism $f$ between embeddable schemes.

In order to have a notion of relative dimension for an embeddable scheme we have to assume that the irreducible components of the schemes we consider coincide with connected components. This is in particular satisfied if the scheme is normal or irreducible. As mentioned in our conventions we will therefore impose this condition on the schemes we consider.

\begin{Def}
For each $X \in Sch_{\text{emb}, k}$ we define the \emph{trivial $t$-structure} on $D^b_?(X)$ by taking cohomology at the dimension of the corresponding irreducible components, where $? \in \{crys, unit\, F\}$.
\end{Def}

If $i: X \to Y$ is a closed immersion of irreducible schemes of relative dimension $n$, then in Cartier crystals we will denote by $i^!$ the $n$th derivative of the functor $\bar{i}^\ast \Hom(i_\ast \mathcal{O}_X, \bullet)$. For a smooth morphism $f: X \to Y$ we (re)define $f^!$ as $f^!$ shifted by the relative dimension. Then $f^!$ induces an exact functor between the hearts of the $t$-structures. Similarly, if $f:X \to Y$ is an lci morphism, then $f$ also induces an exact functor between hearts. 

With this in mind one has that the equivalence $\Sigma$ from Cartier crystals to unit $F$-modules commutes with $j^!$ and $j_\ast$ corresponds to $j_+$ for a locally closed immersion (see \cite[Theorem 5.12]{schedlmeiercartierpervers}). In particular, under the anti-equivalence $\Sol \circ \Sigma$ from Cartier crystals to perverse constructible $\mathbb{F}_p$-sheaves $j^!$ corresponds to $j^{-1}$ and $j_+$ to $j_!$ for a locally closed immersion $j$.

\begin{Ko}
Under the anti-equivalence $\Sol$ of Theorem \ref{unitFConstructibleAntiequivalence} the trivial $t$-structure on $D^b_{unit\, F}(X)$ corresponds to the middle perversity. In particular, if $X$ is embeddable, then the category of unit $F$-modules on $X$ is anti-equivalent to perverse constructible $\mathbb{F}_p$- sheaves on the \'etale site $X_{\'et}$.
\end{Ko}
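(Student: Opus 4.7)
The substantive content of the corollary is the identification of the two $t$-structures under $\Sol$; the anti-equivalence of hearts in the second sentence then follows by passage to hearts. The plan is to reduce to the smooth case, where the statement is already contained in the Riemann-Hilbert correspondence of Emerton-Kisin, and then transfer the result via a closed embedding.

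In the smooth case, unwinding the conventions: for a smooth irreducible $X$ of dimension $d$, the trivial $t$-structure as defined here places unit $F$-modules in the cohomological degree determined by the shifted $f^!$ for $f:X \to \Spec k$, which after the shift by the relative dimension $d$ agrees with the natural $t$-structure on unit $F$-modules considered in \cite{emertonkisinrhunitfcrys}. Their Riemann-Hilbert theorem (cf.\ \cite[\S \S 11]{emertonkisinrhunitfcrys} together with the derived version in \cite[\S \S 17]{emertonkisinrhunitfcrys}) then identifies this with middle perversity on $D^b_c(X)$.

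For embeddable $X$, fix a closed immersion $i: X \to X'$ with $X'$ smooth and denote by $j: U \to X'$ the open complement. By construction $D^b_{unit\, F}(X)$ is the full subcategory of $D^b_{unit\, F}(X')$ consisting of objects $M$ with $j^!M = 0$, and dually $D^b_c(X)$ embeds into $D^b_c(X')$ via $i_\ast = i_!$ as the full subcategory of objects supported on $X$ (i.e.\ with vanishing $j^{-1}$). The compatibility $\Sol_{X'} \circ i_+ \cong i_! \circ \Sol_X$ from Theorem \ref{unitFConstructibleAntiequivalence}, combined with $i_! = i_\ast$ for the closed immersion $i$, exhibits $\Sol_X$ as the restriction of $\Sol_{X'}$ under these identifications. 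On the constructible side $i_\ast$ is $t$-exact for middle perversity; on the unit $F$ side, the description of $i^!$ as a complete-intersection shift (as recalled after Theorem \ref{CartierShriek}) together with the componentwise definition of the trivial $t$-structure (which differs between $X$ and $X'$ by precisely the codimension) makes $i_+$ $t$-exact as well. Combining with the smooth case yields the claim.

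The main obstacle is the bookkeeping of shifts, particularly when $X$ has several irreducible components of differing dimensions. This is manageable thanks to our standing assumption that connected and irreducible components coincide, so both $t$-structures (and the functors $i_+$, $i_\ast$) decompose componentwise, and the analysis reduces to the irreducible case carried out above.
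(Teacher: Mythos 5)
Your argument is correct and is essentially the route the paper takes: the paper's entire proof is a citation of \cite[Theorem 11.5.4]{emertonkisinrhunitfcrys} (the smooth-case Riemann--Hilbert statement you invoke via \S\S 11 and 17 of loc.\ cit.), with the embeddable case being the formal transfer along a closed embedding that you spell out, since unit $F(X)$ and the supported perverse sheaves are by definition full subcategories on $\tilde{X}$ matched by $\Sol$ and $i_+ \leftrightarrow i_! = i_\ast$. The one blemish is your appeal to the complete-intersection description of $i^!$ for the closed immersion $i$ (which need not be lci when $X$ is singular); this is harmless because it is not needed -- $i_+$ is literally the inclusion of a full subcategory, so the $t$-exactness you want is a matter of the degree conventions you already track componentwise.
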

\begin{proof}
See \cite[Theorem 11.5.4]{emertonkisinrhunitfcrys}.
\end{proof}

Combining Theorem \ref{CartierunitFequivalence} with Theorem \ref{unitFConstructibleAntiequivalence} we obtain
\begin{Ko}
\label{CrysPervEquivalence}
Let $X$ be an $F$-finite embeddable scheme. We have an anti-equivalence $\Sol \circ \Sigma_X\colon D^b_{crys}(X) \to D^b_c(X)$ where for a locally closed immersion $j$ we have $\Sol \circ \Sigma \circ j^! \cong j^{-1} \circ \Sol \circ \Sigma$ and $\Sol \circ \Sigma \circ j_\ast \cong j_+ \circ \Sol \circ \Sigma$. Moreover, this anti-equivalence maps the trivial $t$-structure to the middle perversity and therefore induces an equivalence of Cartier crystals with perverse constructible $\mathbb{F}_p$-sheaves on the \'etale site $X_{\'et}$.
\end{Ko}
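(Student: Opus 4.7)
The statement is essentially a bookkeeping corollary that assembles the preceding results, so my plan is to verify each assertion by direct composition without any new input.

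My first step would be to obtain the anti-equivalence of triangulated categories. By Theorem \ref{CartierunitFequivalence}, $\Sigma_X \colon D^b_{crys}(X) \to D^b_{unit\, F}(X)$ is an equivalence for every $F$-finite embeddable scheme $X$, and by Theorem \ref{unitFConstructibleAntiequivalence}, $\Sol \colon D^b_{unit\, F}(X) \to D^b_c(X)$ is an anti-equivalence. The composition $\Sol \circ \Sigma_X$ is therefore an anti-equivalence, and no further work is needed for the first claim.

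Next I would establish the compatibilities with locally closed immersions $j$. The paragraph immediately preceding the statement records that, as part of \cite[Theorem 5.12]{schedlmeiercartierpervers}, $\Sigma$ commutes with $j^!$ and sends $j_\ast$ to $j_+$. Theorem \ref{unitFConstructibleAntiequivalence} supplies natural isomorphisms $\Sol \circ j^! \cong j^{-1} \circ \Sol$ and $\Sol \circ j_+ \cong j_! \circ \Sol$ for locally closed $j$ in the embeddable setting. Concatenating these two-step isomorphisms yields
\[
\Sol \circ \Sigma \circ j^! \;\cong\; \Sol \circ j^! \circ \Sigma \;\cong\; j^{-1} \circ \Sol \circ \Sigma,
\]
and analogously $\Sol \circ \Sigma \circ j_\ast \cong \Sol \circ j_+ \circ \Sigma \cong j_! \circ \Sol \circ \Sigma$, which is the asserted compatibility (the $j_+$ appearing on the constructible side is the one identified with $j_!$ in the conventions laid out before the statement).

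Finally, for the $t$-structure statement, the corollary preceding this one (attributed to \cite[Theorem 11.5.4]{emertonkisinrhunitfcrys}) already identifies the trivial $t$-structure on $D^b_{unit\, F}(X)$ with the middle perversity on $D^b_c(X)$ under $\Sol$. It remains to note that $\Sigma_X$ is defined so as to preserve the trivial $t$-structure: in the smooth case this is part of Theorem \ref{CartierunitFequivalence}, and for embeddable $X$ one uses that $\Sigma_X = \Sigma_{X'} \circ i_\ast$ for a closed embedding $i \colon X \to X'$ into a smooth $X'$, with $i_\ast$ shifted appropriately so that $t$-exactness is preserved (this is built into the definition of the trivial $t$-structure via the relative dimension convention). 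Passage to the hearts then yields the anti-equivalence of Cartier crystals with perverse constructible $\mathbb{F}_p$-sheaves on $X_{\text{\'et}}$.

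There is essentially no obstacle: every ingredient is already packaged in Theorems \ref{CartierunitFequivalence} and \ref{unitFConstructibleAntiequivalence} together with the two remarks preceding the statement. The only point where one must be careful is the bookkeeping of shifts in the trivial $t$-structure when $X$ is not irreducible, but this was addressed in the conventions via the component-wise definition of relative dimension.
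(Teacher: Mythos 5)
Your proposal is correct and matches the paper's own (implicit) argument: the corollary is stated as an immediate consequence of combining Theorem \ref{CartierunitFequivalence} with Theorem \ref{unitFConstructibleAntiequivalence}, together with the preceding remarks on the compatibility of $\Sigma$ with $j^!$, $j_\ast$ and the $t$-structure corollary, which is exactly the assembly you carry out. Your parenthetical note that the functor landing on the constructible side is the one denoted $j_!$ in Theorem \ref{unitFConstructibleAntiequivalence} is the right reading of the statement's notation.
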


\section{The functor $f^!$ for a smooth morphism $f$}
\label{EquivalenceShriekPullbackCompatible}

The results in this section should be of independent interest. We prove that the equivalence between Cartier crystals and unit $F$-modules for embeddable schemes (as in Theorem \ref{CartierunitFequivalence}) commutes with $f^!$ for smooth morphisms $f$, i.e.\ $\Sigma \circ f^! \cong f^! \circ \Sigma$. This is accomplished in Theorems \ref{SmoothEquivalenceSmooth} and \ref{ShriekSmoothEmbeddableCommute}. We then proceed to show that under the anti-equivalence between Cartier crystals and perverse constructible sheaves $f^!$ corresponds to $f^{-1}$ for smooth morphisms between embeddable schemes (Corollary \ref{CrystoPervSmoothEmbeddable}).

Recall that we have a functor $\Sigma$ from coherent Cartier modules to unit $F$-modules  that induces an equivalence when passing to crystals (Theorem \ref{CartierunitFequivalence}). The following lemma explicitly describes the adjoint of the unit $F$ structural map in terms of the Cartier module via the functor $\Sigma$.
\begin{Le}
\label{CartiertoUnitFExplicit}
Let $R$ be smooth over some $F$-finite field and fix an isomorphism $C: \omega_R \to F^! \omega_R$ with adjoint $\kappa: F_\ast \omega_R \to \omega_R$. Let $(M, \kappa_M)$ be a Cartier module and write $(\Hom(\omega_R, \mathcal{M}), \Psi)$ for the corresponding unit $F$-module.
Then the adjoint map to $\Psi: F^\ast \Hom(\omega_R, \mathcal{M}) \to \Hom(\omega_R, \mathcal{M})$, namely \[\Phi: \Hom(\omega_R, \mathcal{M}) \to F_\ast\Hom(\omega_R,\mathcal{M})\] is given as the following composition of maps
\[ \Hom(\omega_R, \mathcal{M}) \longrightarrow \Hom(F_\ast \omega_R, \mathcal{M}) \longrightarrow F_\ast \Hom(\omega_R, F^! \mathcal{M}) \longrightarrow F_\ast \Hom(\omega_R, \mathcal{M})\]
where the arrows are induced by $\kappa: F_\ast \omega_R \to \omega_R$, adjunction of $F_\ast$ and $F^!$ and the inverse of $C_{\mathcal{M}}: \mathcal{M} \to F^! \mathcal{M}$ respectively.  Here $C_{\mathcal{M}}$ is obtained by taking the direct limit over the maps $M \to F^{e!} M$ which are adjoint to $\kappa^e: F_\ast^e M \to M$.
\end{Le}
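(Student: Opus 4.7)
My plan is to verify the identity by explicitly unwinding the two adjunctions in play: the classical sheaf adjunction $(F^\ast, F_\ast)$, and the finite-duality adjunction $(F_\ast, F^!)$, which is available because $F$ is finite under the $F$-finiteness hypothesis. I will work locally, reducing to an affine smooth $R$, where $\omega_R$ is invertible, so that $\Hom(\omega_R, \mathcal{M}) \cong \mathcal{M} \otimes \omega_R^{-1}$ and all Homs can be manipulated as Homs of quasi-coherent sheaves.

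First I would write $\Psi$ explicitly. By the construction in \ref{SigmaConstruction}, the map $\Psi: F^\ast(\mathcal{M} \otimes \omega_R^{-1}) \to \mathcal{M} \otimes \omega_R^{-1}$ is obtained from $C_\mathcal{M}^{-1}: F^!\mathcal{M} \to \mathcal{M}$ after the canonical identification $F^!(-) \otimes \omega_R^{-1} \cong F^\ast(-) \otimes F^\ast\omega_R^{-1}$ and the fact that $\omega_R$ and $F^\ast \omega_R$ are isomorphic via $C$. In particular, $\Psi$ is governed by $C_\mathcal{M}$, which is by construction the colimit of the finite-duality adjoints of the iterated Cartier structures $\kappa_M^e$.

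Next I would analyze the claimed composition arrow-by-arrow. The first map is pre-composition with $\kappa: F_\ast\omega_R \to \omega_R$. The second is the sheaf version of the finite-duality isomorphism $\Hom(F_\ast\omega_R, \mathcal{M}) \cong F_\ast\Hom(\omega_R, F^!\mathcal{M})$, which is an isomorphism because $F$ is finite and $\omega_R$ is coherent. The third is post-composition with $C_\mathcal{M}^{-1}$. These three arrows together give a candidate for the $(F^\ast, F_\ast)$-adjoint of $\Psi$.

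The core of the proof is to match this composition with the adjoint of $\Psi$. Since $\kappa$ and $C$ are adjoint under $(F_\ast, F^!)$, pre-composition with $\kappa$ transposes under finite duality to post-composition with $C$ on the target; combining this with the last arrow $C_\mathcal{M}^{-1}$ recovers exactly the recipe defining $\Psi$. I would make this precise by chasing a section $\varphi \in \Hom(\omega_R, \mathcal{M})$ through both sides. The main technical obstacle is the bookkeeping of the $\omega_R$-twists: one must show that the identification $F^!(-) \otimes \omega_R^{-1} \cong F^\ast(-) \otimes F^\ast\omega_R^{-1}$ used to define $\Psi$ is compatible with the $(F_\ast, F^!)$-duality isomorphism appearing on the Hom side. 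This is a standard compatibility of Grothendieck--Serre duality with the tensor structure, which is cleanest to verify by fixing a local generator of $\omega_R$ and applying the definitions on a single section.
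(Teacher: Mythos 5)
Your proposal follows essentially the same route as the paper's proof: identify $\Hom(\omega_R,-)$ with $-\otimes\omega_R^{-1}$, unwind $\Psi$ as $C_{\mathcal{M}}^{-1}\otimes\id$ transported along the twist isomorphism $F^\ast(\mathcal{M}\otimes\omega_R^{-1})\cong F^!\mathcal{M}\otimes\omega_R^{-1}$, and then verify the compatibility of that twist isomorphism with the $(F_\ast,F^!)$-adjunction on the $\Hom$ side by an explicit computation on sections after fixing a local generator $ds$ of $\omega_R$ --- which is exactly the content of the paper's argument. (One minor slip: the isomorphism furnished by $C$ is $\omega_R\cong F^!\omega_R$, not $\omega_R\cong F^\ast\omega_R$; this does not affect the plan.)
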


\begin{proof}
%Let us denote the composition of maps above by $\Psi$.
We may identify $\Hom(\omega_R, \bullet)$ with $\bullet \otimes \omega_R^{-1}$. Recall that the unit $F$ structural map $F^\ast ( \mathcal{M} \otimes \omega_R^{-1}) \to \mathcal{M} \otimes \omega_R^{-1}$ is induced from $C_{\mathcal{M}}^{-1} \otimes \id_{\omega_R^{-1}}$ by the isomorphism $\alpha: F^\ast (\mathcal{M} \otimes \omega_R^{-1} ) \to F^! \mathcal{M} \otimes \omega_R^{-1}$ (cf. \cite[Corollary 5.8]{blickleboecklecartierfiniteness}).

In particular, it suffices to verify that the adjoint of $F_\ast(C_{\mathcal{M}} \otimes \id_{\omega_R^{-1}}) \circ \Phi$ is $\alpha$. Note that the adjoint of $F_\ast(C_{\mathcal{M}} \otimes \id_{\omega_R^{-1}}) \circ \Phi$ is given by \[F^\ast \Hom(\omega_R, \mathcal{M}) \longrightarrow \Hom(\omega_R, F^! \mathcal{M}),\quad t \otimes \varphi \longmapsto t [ds \mapsto [r \mapsto \varphi(\kappa(r ds))]].\]

The map $\alpha$ comes about as follows: Tensoring the isomorphism $F^! R \otimes F^\ast \omega_R \to F^! \omega$ (apply \cite[Lemma 5.7]{blickleboecklecartierfiniteness} with $M = \omega_R$) with $F^\ast \omega_R^{-1}$ we get an isomorphism \[F^! R \longrightarrow F^! \omega_R \otimes F^\ast \omega_R^{-1}.\] Let us denote its inverse by $\lambda$. By \cite[Lemma 5.7]{blickleboecklecartierfiniteness} we have an isomorphism $F^! R \otimes F^\ast \mathcal{M} \to F^! \mathcal{M}$. Then $\lambda$ induces \begin{equation}F^! \omega_R \otimes F^\ast \omega_R^{-1} \otimes F^\ast \mathcal{M} = F^! \omega_R \otimes F^\ast \Hom(\omega_R, \mathcal{M}) \to F^! \mathcal{M}. \label{Equation1} \end{equation} Now one finally identifies $F^! \omega_R$ with $\omega_R$ via the fixed isomorphism $C^{-1}$ and tensors both sides with $\omega^{-1}_R$.

First, we claim that the morphism \eqref{Equation1} is given by the natural map $\varphi \otimes \psi \mapsto \psi \circ \varphi$. In order to verify this it suffices to show that the composition of this natural map with $F^! R \otimes F^\ast \mathcal{M} \to F^! \omega_R \otimes F^\ast \omega^{-1}_R \otimes F^\ast \mathcal{M}$ coincides with the given map $F^! R \otimes F^\ast \mathcal{M} \to F^! \mathcal{M}$. This is a local issue so that we may assume that $\omega_R$ is a free $R$-module generated by $ds$. Then the map \[F^! R \otimes  F^\ast \mathcal{M} \to F^! \omega_R \otimes F^\ast \omega^{-1}_R \otimes F^\ast \mathcal{M} \to F^! \omega_R \otimes F^\ast \Hom(\omega_R, \mathcal{M})\] is given by sending $\varphi \otimes t \otimes m$ to $[r \mapsto \varphi(tr) ds] \otimes 1 \otimes [rds \mapsto (ds)^\vee (r ds) m ]$ and one readily checks that composition with the natural map yields the claimed isomorphism.

Now by the above the map $\omega_R \otimes F^\ast \Hom(\omega_R, \mathcal{M}) \to F^! \mathcal{M}$ is given by $ds \otimes \psi \mapsto [r \mapsto \psi(\kappa(rds))]$. Tensoring with $\omega^{-1}_R$ and making the identification $\omega^{-1}_R \otimes F^! \mathcal{M} = \Hom(\omega_R, F^! \mathcal{M})$ finally yields the map \[ds^\vee \otimes ds \otimes \psi \mapsto [dt \mapsto [r \mapsto \psi(\kappa(r ds^\vee(dt) ds))]].\] Since $dt = u ds$ and then $ds^\vee(dt) = u$ this coincides with the adjoint of $F_\ast(C_{\mathcal{M}} \otimes \id_{\omega_R^{-1}}) \circ \Phi$ as described above.
%Note that in this last everything is considered as ``$F_\ast R$''-modules.
%Of course adjunction $\Hom_S(M \otimes_R S, N) \to \Hom_R(M, N_R)$ is given by $\psi \mapsto [m \mapsto \psi(m \otimes 1)]$ or $\varphi \mapsto [m \otimes s \mapsto s \varphi(m)]$ respectively.
\end{proof}

\begin{Bem}
In practice if $R$ is (essentially) of finite type over an $F$-finite field $k$, then one fixes once and for all an isomorphism $k \to F^! k$. If $f: \Spec R \to \Spec k$ is the structural map, then $f^!$ induces an isomorphism $\omega_R \to F^! \omega_R$. If $k$ is not perfect, then there is no canonical choice for the isomorphism $k \to F^! k$.
\end{Bem}

\begin{Def}
Let $f: X \to Y$ be a smooth morphism of affine schemes. We call $f$ \emph{standard smooth} if $f$ factors as $g \circ \varphi$, where $g: \mathbb{A}^n_Y \to Y$ is the structural map and $\varphi: X \to \mathbb{A}_Y^n$ is \'etale.
\end{Def}

\begin{Theo}
\label{SmoothEquivalenceSmooth}
Let $f: X \to Y$ be a smooth morphism of smooth schemes. Then if $\Sigma_?$ denotes the equivalence $Crys(?) \to unit\,F(?)$ of Theorem \ref{CartierunitFequivalence}, one has $\Sigma_X \circ f^! \cong f^! \circ \Sigma_Y$. 
\end{Theo}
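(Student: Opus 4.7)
The plan is to reduce to a standard smooth factorization and then check compatibility on each piece using the explicit formulas from Theorem \ref{CartierShriek} and Lemma \ref{CartiertoUnitFExplicit}. Since the construction of $\Sigma$, the Cartier twisted inverse image $f^!$, and the unit-$F$ twisted inverse image $f^!$ are all local on $X$ and $Y$, I may work affine-locally and assume that $f$ factors as $X \xrightarrow{\varphi} \mathbb{A}^n_Y \xrightarrow{g} Y$ with $\varphi$ \'etale and $g$ the structural morphism, so that $f^! \cong \varphi^! \circ g^!$ on both sides. It then suffices to establish the natural isomorphism $\Sigma \circ h^! \cong h^! \circ \Sigma$ separately for $h = \varphi$ and for $h = g$, and then compose.

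In the \'etale case $\varphi^! = \varphi^\ast$ on Cartier modules, with Cartier structure induced from the flat base change isomorphism $\varphi^\ast F^e_\ast \cong F^e_\ast \varphi^\ast$, and $\varphi^! = \varphi^\ast$ on unit $F$-modules, with unit-$F$ structure built from the analogous base change for $F^\ast$. Because $\varphi^\ast$ is exact and commutes with colimits, the canonical map $\varphi^\ast \colim_e F^{e!} M \to \colim_e F^{e!} \varphi^\ast M$ is an isomorphism; combined with $\varphi^\ast \omega_Y^{-1} \cong \omega_X^{-1}$ this produces a natural morphism $\varphi^\ast \Sigma_Y(M) \to \Sigma_X(\varphi^! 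M)$. Its compatibility with the unit-$F$ structures is then a diagram chase via Lemma \ref{CartiertoUnitFExplicit}, using the fact that flat base change intertwines the $F_\ast$-$F^!$ adjunction.

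In the projection case $g^! M = g^\ast M \otimes \omega_{\mathbb{A}^n_Y/Y}$, and the isomorphism $\omega_{\mathbb{A}^n_Y} \cong g^\ast \omega_Y \otimes \omega_{\mathbb{A}^n_Y/Y}$ together with exactness of $g^\ast$ and its compatibility with colimits yields a candidate isomorphism on underlying $\mathcal{O}$-modules between $\Sigma_{\mathbb{A}^n_Y}(g^! M)$ and $g^! \Sigma_Y(M)$. The main obstacle is to show that this candidate intertwines the two unit-$F$ structures. For this one combines the explicit monomial formula from Theorem \ref{CartierShriek}, namely $m \otimes x_1^{t_1}\cdots x_n^{t_n}\,dx \mapsto \kappa_M(m) \otimes x_1^{(t_1+1)/p^e-1}\cdots x_n^{(t_n+1)/p^e-1}\,dx$, with the explicit form of the structural map from Lemma \ref{CartiertoUnitFExplicit} applied on $\mathbb{A}^n_Y$. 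Expanding both sides in the monomial basis of $\omega_{\mathbb{A}^n_Y/Y}$, the compatibility reduces to the statement that the trace map $\kappa_{\mathbb{A}^n_Y}: F_\ast \omega_{\mathbb{A}^n_Y} \to \omega_{\mathbb{A}^n_Y}$ is determined by $\kappa_Y$ on $F_\ast \omega_Y$ together with the usual Cartier action on monomials, which is precisely the input feeding into the projection-formula description of $g^! \Sigma_Y(M)$.

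Naturality in $M$ of all constructions, together with the fact that the two local isomorphisms glue under composition of \'etale maps and projections from affine space, then produces the desired natural isomorphism $\Sigma_X \circ f^! \cong f^! \circ \Sigma_Y$ for arbitrary smooth $f$ between smooth $F$-finite schemes. The bookkeeping involved in tracking the various duality isomorphisms $F^! \omega \cong \omega$, $\omega_X \cong g^\ast \omega_Y \otimes \omega_{X/Y}$ and the $F_\ast$-$F^!$ adjunction through this process is what makes the projection case technically the most involved step.
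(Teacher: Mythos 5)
Your proposal follows essentially the same route as the paper: reduce locally to a standard smooth factorization, treat the \'etale piece and the projection $\mathbb{A}^n_Y \to Y$ separately, and verify compatibility of the structural maps using the explicit Cartier structure on $f^!$ from Theorem \ref{CartierShriek} together with the adjoint description in Lemma \ref{CartiertoUnitFExplicit}. The only organizational difference is that the paper first builds the comparison isomorphism $\beta$ globally on underlying $\mathcal{O}_X$-modules and localizes only for the diagram check (which avoids having to glue), but the substance of the argument is the same.
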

\begin{proof}
Recall that the functor $f^!$ on unit $F$-modules is simply $f^\ast$ on underlying $\mathcal{O}_?$-modules (and we will use this notation in the proof). Observe that we have a natural isomorphism $\beta$ of underlying $\mathcal{O}_X$-modules: 
\begin{align*}f^\ast \Sigma_Y(M) =  f^\ast (\colim_{e \geq 0} {F^e_Y}^! (M) \otimes \omega_Y^{-1}) &\cong f^\ast(\colim_{e \geq 0} {F^e_Y}^!M)\otimes \omega_{X/Y} \otimes \omega_X^{-1}\\ &\cong f^! (\colim_{e \geq 0} {F^e_Y}^! M) \otimes \omega_X^{-1} \\& \cong (\colim_{e \geq 0} f^! {F^e_Y}^! M) \otimes \omega_X^{-1} \\ &\cong  (\colim_{e \geq 0} {F^e_X}^! f^! M) \otimes \omega_X^{-1} = \Sigma_X(f^!M), \end{align*}
where for the second and third isomorphism we use that $f$ is smooth so that $f^! M = f^\ast M \otimes \omega_{X/Y}$.
It remains to verify that this interchanges unit $F$ structures, i.e.\ we have to verify that the following diagram is commutative: \begin{equation} \label{DiagramCommute} \begin{xy} \xymatrix{f^\ast \Sigma(M) \ar[r]^\beta & \Sigma(f^!M) \\ F^\ast f^\ast \Sigma(M) \ar[r]^{F^\ast \beta} \ar[u] & F^\ast \Sigma(f^!M) \ar[u]} \end{xy} \end{equation}
Here the vertical maps are the unit $F$-module structures on $f^\ast \Sigma(M)$ and $\Sigma(f^!M)$ respectively. 
The commutativity of (\ref{DiagramCommute}) is a local statement so that we may assume that $f = g \circ h$ is standard smooth, with $h: \Spec S \to \Spec \mathbb{A}^n_R$ \'etale and $g: \mathbb{A}^n_R \to \Spec R$. We treat these cases separately, i.e.\ we have to look at the case of an \'etale morphism $\Spec S \to \Spec R$ and a smooth morphism $\mathbb{A}^n_R \to \Spec R$. Fix an $R$ Cartier module $(M, \kappa)$ and denote $ \colim_{e \geq 0} F^{e!} M$ by $\mathcal{M}$. We denote the adjoint structural map of $M$ by $C$. From here on out we will omit the subscript on $\Sigma$.

Now $\Sigma(M) = \Hom(\omega_R, \mathcal{M})$ comes equipped with a unit $F$-structure which admits an adjoint $\Phi: \Hom(\omega_R, \mathcal{M}) \to F_\ast \Hom(\omega_R, \mathcal{M})$.
If $f$ is any smooth morphism, then a small computation shows that the adjoint structural map of the unit $F$-module $f^\ast \Sigma(M) = f^\ast \Hom(\omega_R, \mathcal{M})$ is given by 
\begin{equation} \label{PsiDef}\Psi: f^\ast \Hom(\omega_R, \mathcal{M}) \longrightarrow F_\ast f^\ast \Hom(\omega_R, \mathcal{M}), \quad s \otimes \varphi \longmapsto s^p \otimes \Phi(\varphi).\end{equation}

Next we denote the adjoint structural map of the unit $F$-module $\Sigma(f^!M)$ by $\Xi$. Instead of verifying the commutativity of diagram (\ref{DiagramCommute}) we may also verify that the corresponding diagram with adjoint structural maps commutes. This is what we shall do. Moreover, we may also compose the claimed equality with an isomorphism and will therefore show that
\begin{equation} \label{Whatweshow} \Hom(\omega_S, -)( F_\ast \Lambda) \circ \Xi \circ \beta = \Hom(\omega_S, -)( F_\ast \Lambda) \circ F_\ast \beta \circ \Psi, \end{equation}
 where $\Lambda: f^! \mathcal{M} \to F^! f^! \mathcal{M}$ is the adjoint structure map of $f^! \mathcal{M}$. For the convenience of the reader let us draw the  diagram corresponding to (\ref{Whatweshow}):

\[\begin{xy} \xymatrix{f^\ast \Hom(\omega_R, \mathcal{M}) \ar[r]^{\beta} \ar[dr]^{\Psi} & \Hom(\omega_S, f^! \mathcal{M}) \ar[r]^{\Xi} & F_\ast \Hom(\omega_S, f^! \mathcal{M}) \ar[r]^{ F_\ast \Lambda \circ -}& F_\ast \Hom(\omega_S, F_\ast f^! \mathcal{M})
\\ &F_\ast f^\ast \Hom(\omega_R, \mathcal{M})) \ar[r] & F_\ast \Hom(\omega_S, f^! \mathcal{M}) \ar[ur]_{ F_\ast \Lambda \circ -}} \end{xy} \]

Recall that Lemma \ref{CartiertoUnitFExplicit} yields a description of $\Xi$ and, together with (\ref{PsiDef}), also a description of $\Psi$.  We denote the structural map $F_\ast f^! M \to f^!M$ of $f^!M$ by $\lambda$. By abuse of notation we will use the same letter for structural maps (and their adjoints) on the colimits $f^! \mathcal{M}, \mathcal{M}$. The fixed Cartier structure on $\omega_R$ is denoted by $\kappa_\omega$. Applying $f^!$ to the isomorphism $\omega_R \to F^! \omega_R$ and then using adjunction this induces a Cartier structure $\kappa_{\omega_S}$ on $\omega_S$.

Finally, we will verify (\ref{Whatweshow}) by evaluation on elements and therefore do not need to keep track of the colimit in the natural isomorphism $\beta$. Moreover, by construction, the Cartier structure on $f^!M$ is obtained via the natural isomorphism $F^! f^! M \cong f^! F^! M$ so that it is in particular Cartier linear. We may thus also identify $f^! F^! M$ with $F^! f^!M$ when applying $\beta$.

\begin{claim}
\label{SmoothEquivalenceSmoothClaim1}
If  $f: \Spec S \to \Spec R$ is \'etale, then (\ref{Whatweshow}) holds.
\end{claim}
\begin{proof}[Proof of claim.]
Recall (Theorem \ref{CartierShriek}) that $f^! = f^\ast$ and that one has an isomorphism $\alpha: S \otimes_R F_\ast M \to F_\ast(S \otimes_R M), s \otimes m \mapsto s^p \otimes m$. In particular, any $s \in S$ can be written as $\sum_i r_i s_i^p$ with $r_i \in R$. The Cartier structure on $f^! M$ is given by $\lambda: (\id \otimes \kappa) \circ \alpha^{-1}$. Since both maps are clearly additive we may restrict our attention to tensors of the form $s^p \otimes \bullet$.

Note that $\beta$ is simply the natural isomorphism $f^\ast \Hom(\omega_R, \mathcal{M}) \to \Hom(\omega_S, f^\ast \mathcal{M})$ (since $\omega_S = f^\ast \omega_R$).
We then have that $\Hom(\omega_S, -)( F_\ast \Lambda) \circ F_\ast \beta\circ \Psi(s \otimes \varphi)(t^p \otimes w)$ is given by \[u^p \longmapsto \lambda(u^p s^p t^p \otimes C^{-1} ([r \mapsto \varphi(\kappa_\omega(rw))])).\] Using the fact that $\lambda = (\id \otimes \kappa) \circ \alpha^{-1}$ we obtain \[u^p \mapsto \id \otimes \kappa (ust \otimes C^{-1}([r \mapsto \varphi(\kappa_\omega(rw))])) = u^p \mapsto ust \otimes \varphi(\kappa_\omega(w)).\]

On the other hand, using Lemma \ref{CartiertoUnitFExplicit}, $\Hom(\omega_S, -)( F_\ast \Lambda) \circ \Xi \circ \beta$ is given as the following composition of maps
\[\begin{xy}\xymatrix{f^\ast \Hom(\omega_R, \mathcal{M}) \ar[r]^{\beta}& \Hom(\omega_S, f^\ast \mathcal{M}) \ar[r]^{\kappa_{\omega_S}} & \Hom(F_\ast \omega_S, f^\ast \mathcal{M}) \ar[r]^{\text{adj.}} & F_\ast \Hom(\omega_S, F^! f^\ast \mathcal{M}) } \end{xy}\]
An element $s \otimes \varphi$ is then mapped via $\beta$ to $t^p \otimes \omega \mapsto st^p \otimes \varphi(\omega)$. Applying $\kappa_{\omega_S}$ and then adjunction for finite maps we obtain \[t^p \otimes \omega \longmapsto [u^p \mapsto stu \otimes \varphi(\kappa_{\omega}(\omega))]\] as claimed.
\end{proof}

\begin{claim}
\label{SmoothEquivalenceSmoothClaim2}
If $f: \mathbb{A}^n_R \to \Spec R$ is the structural map, then (\ref{Whatweshow}) holds.
\end{claim}
\begin{proof}[Proof of claim.]
We may assume that $n =1$ and denote the coordinate by $x$. Moreover, we may assume that $\Omega^1_{R/k}$ is free with basis $dy_1, \ldots, dy_i$. We denote the corresponding basis on $\omega_R$ by $\delta$. Then $f^! M$ is given by $f^\ast M \otimes \omega_f$, where $\omega_f = R[x] dx$ is a free $R[x]$-module of rank $1$. One has a natural isomorphism of Cartier modules $f^! \omega_R = \omega_f \otimes f^\ast \omega_R \cong \omega_{R[x]}$ given by $\delta \otimes 1 \otimes dx \mapsto \delta \wedge dx$. We have \begin{equation}\label{BetaSmooth}\begin{split} \beta: f^\ast \Hom(\omega_R, \mathcal{M}) &\longrightarrow \Hom(\omega_{R[x]}, f^!\mathcal{M}) \\ x^a \otimes \varphi & \longmapsto [ \delta \wedge dx \mapsto \varphi(\delta) \otimes 1 \otimes x^a dx]\end{split}\end{equation}
%\begin{align*} \beta^{-1}: \Hom(\omega_{R[x]}, f^! \mathcal{M}) &\longrightarrow f^\ast \Hom(\omega_R, \mathcal{M})\\ [\delta \wedge dx \mapsto m \otimes s \otimes dx] &\longmapsto [s \otimes [\delta \mapsto m]].\end{align*}
The induced Cartier structure on $f^! \mathcal{M}$ is given by $rx^n dx \otimes m \mapsto x^{\frac{n+1}{p} -1} \otimes \kappa(rm)$ (see \cite[Lemma 4.1]{staeblerunitftestmoduln})\footnote{with the usual convention that $r \in \mathbb{Q} \setminus \mathbb{Z}$ means that  $x^{r} = 0$}.

Let us compute the right-hand side of (\ref{Whatweshow}). Since our maps are additive we may restrict our attention to tensors of the form $x^a \otimes \varphi$. Recall that by (\ref{PsiDef}) the map $\Psi: f^\ast \Hom(\omega_R, \mathcal{M}) \to F_\ast f^\ast \Hom_R(\omega_R, \mathcal{M})$ is given by $x^a \otimes \varphi \mapsto x^{ap} \otimes \Phi(\varphi)$. Now, using Lemma \ref{CartiertoUnitFExplicit} we further check that $\Psi$ is given by 
\[x^a \otimes \varphi \longmapsto x^{ap} \otimes [r \delta \mapsto C^{-1}[s \mapsto \varphi(\kappa_\omega(rs \delta))]]. \]
Next, we compose with $F_\ast \beta$ (see (\ref{BetaSmooth}) above) and obtain that $F_\ast \beta \circ \Psi$ is of the form
\[x^a \otimes \varphi \longmapsto [ r\delta \wedge x^b dx \mapsto [C^{-1}[s \mapsto \varphi(\kappa_\omega(rs\delta)] \otimes 1 \otimes x^{ap + b} dx]. \] Finally composing with $\Hom(\omega_S, -)( F_\ast \Lambda)$ we get that $\Hom(\omega_S, -)( F_\ast \Lambda) \circ F_\ast \beta \circ \Psi$ is given by
\begin{equation} \label{SmoothRighthandside} x^a \otimes \varphi \longmapsto [r\delta \wedge x^bdx \mapsto [sx^c \mapsto \varphi(\kappa_\omega(rs\delta)) \otimes 1 \otimes x^a x^{\frac{c+b+1}{p} -1}dx]]. \end{equation}
Now we compute the left-hand side of (\ref{Whatweshow}). First we apply $\beta$ to $x^a \otimes \varphi$ which we can read off from (\ref{BetaSmooth}). Next we apply $\Hom(\omega_S, -)( F_\ast \Lambda) \circ \Xi$, using Lemma \ref{CartiertoUnitFExplicit} we get that $\Hom(\omega_S, -)( F_\ast \Lambda) \circ \Xi \circ \beta$ is of the form
\begin{equation} \label{SmoothLefthandside} x^a \otimes \varphi \longmapsto [r \delta \wedge x^b dx \mapsto [sx^c \mapsto \varphi(\kappa_\omega(rs\delta)) \otimes 1 \otimes x^a x^{\frac{b+ c +1}{p} -1}dx]]. \end{equation} Note here that $\Xi$ consists of three steps: precompose with $\kappa_{\omega_{R[x]}}$, apply adjunction and then $\Hom(\omega_S, -)(F_\ast \Lambda^{-1})$. Since we precompose with $\kappa_{\omega_{R[x]}}$ we get \[x^a \kappa_{\omega_{R[x]}}( \varphi(r \delta) \otimes 1 \otimes x^b dx ) =  \kappa_{\omega}(\varphi(r\delta)) \otimes x^{a} x^{\frac{b+1}{p} -1} dx \otimes 1\] and in order to obtain (\ref{SmoothLefthandside}) we just have to write out the adjunction.

We see that (\ref{SmoothRighthandside}) and (\ref{SmoothLefthandside}) coincide which proves the claim.
\end{proof}
Combining Claims \ref{SmoothEquivalenceSmoothClaim1} and \ref{SmoothEquivalenceSmoothClaim2} shows the theorem.
\end{proof}

\begin{Ko}
\label{SmoothDerivedSmoothEquivalence}
Let $f: X \to Y$ be a smooth morphism of smooth schemes. Then if $\Sigma_X$ denotes the equivalence $D^b_{crys}(X) \to D^b_{unit\, F}(X)$ (as discussed in Theorem \ref{CartierunitFequivalence}) and similarly for $\Sigma_Y$ one has $\Sigma_X \circ f^! \cong f^! \circ \Sigma_Y$.
\end{Ko}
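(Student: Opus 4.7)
The plan is to bootstrap the abelian-level statement of Theorem \ref{SmoothEquivalenceSmooth} to the derived level by exploiting exactness of $f^!$ for smooth $f$ and the identifications of Propositions \ref{CrystalsDerivedCatEquivalent} and \ref{unitFDerivedCatEquivalent}.

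First, I would observe that for a smooth morphism $f:X\to Y$ of smooth $F$-finite schemes the functor $f^!$ is exact. On the Cartier side, $f^! = f^\ast(\bullet)\otimes\omega_{X/Y}$ with $f$ flat and $\omega_{X/Y}$ locally free, so the underlying functor on quasi-coherent $\mathcal{O}_Y$-modules is exact; the Cartier structure on $f^!M$ is defined functorially in $M$ (cf.\ the discussion after Theorem \ref{CartierShriek}), hence the induced functor $f^!\colon Crys(Y)\to Crys(X)$ is exact. On the unit $F$ side, $f^!$ is just $f^\ast$ on underlying $\mathcal{O}$-modules, again exact, and the unit $F$-structure transports functorially. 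Any shift by relative dimension one wishes to incorporate is exact on the nose.

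Next, by Propositions \ref{CrystalsDerivedCatEquivalent} and \ref{unitFDerivedCatEquivalent}, the inclusions of bounded complexes of coherent/unit $F$-objects induce equivalences $D^b(Crys(?))\xrightarrow{\sim} D^b_{crys}(?)$ and $D^b(unit\,F(?))\xrightarrow{\sim}D^b_{unit\,F}(?)$ for $? \in \{X,Y\}$. Under these identifications, both the derived $\Sigma_?$ and the derived $f^!$ are computed by applying their exact abelian counterparts termwise to a bounded complex. Thus, writing $(M^\bullet,d^\bullet)$ for an object of $D^b(Crys(Y))$, the complexes $\Sigma_X(f^!M^\bullet)$ and $f^!(\Sigma_Y M^\bullet)$ are obtained by applying $\Sigma_X\circ f^!$ and $f^!\circ\Sigma_Y$ to each term of $M^\bullet$.

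Finally, Theorem \ref{SmoothEquivalenceSmooth} supplies a natural isomorphism $\eta_M\colon \Sigma_X(f^!M)\xrightarrow{\sim} f^!(\Sigma_Y M)$ of objects in $unit\,F(X)$, functorial in $M\in Crys(Y)$. Applying $\eta$ termwise to $M^\bullet$ produces a morphism of complexes that commutes with the differentials by naturality, and it is a quasi-isomorphism (in fact an isomorphism of complexes) since each $\eta_{M^i}$ is an isomorphism. This yields the desired natural isomorphism $\Sigma_X\circ f^!\cong f^!\circ\Sigma_Y$ of functors $D^b_{crys}(Y)\to D^b_{unit\,F}(X)$.

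There is essentially no obstacle beyond bookkeeping: the only thing to check is that the equivalences of Propositions \ref{CrystalsDerivedCatEquivalent} and \ref{unitFDerivedCatEquivalent} intertwine the termwise actions of $f^!$ and $\Sigma$, which is immediate from their construction as restrictions of the tautological embeddings at the level of complexes.
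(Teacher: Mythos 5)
Your proposal is correct and follows exactly the same route as the paper, whose proof is a one-line appeal to the exactness of $f^!$ on both sides together with Theorem \ref{SmoothEquivalenceSmooth}, Proposition \ref{CrystalsDerivedCatEquivalent} and Proposition \ref{unitFDerivedCatEquivalent}. You have merely spelled out the termwise bookkeeping that the paper leaves implicit.
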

\begin{proof}
As $f^!$ is exact in both cases this is an immediate consequence of Theorem \ref{SmoothEquivalenceSmooth}, Proposition \ref{CrystalsDerivedCatEquivalent} and Proposition \ref{unitFDerivedCatEquivalent}.
\end{proof}

In order to achieve a result similar to Corollary \ref{SmoothDerivedSmoothEquivalence} for embeddable schemes and $f^!$ on $Crys$ and $f^{-1}$ on perverse constructible sheaves, i.e.\ $f^{-1} \circ \Sol \circ \Sigma \cong \Sol \circ \Sigma \circ f^!$ we need to verify one more compatibility of the shriek functor. That is, we need to verify that if $b: X \to Y$ is a closed immersion and $g: Y \to Z$ and $gb$ are smooth then the isomorphism $(gb)^! \cong b^! g^!$ is compatible with Cartier structures. In particular,  $b$ has to be a locally complete intersection (cf.\ \cite[Th\'eor\`eme II.4.10]{SGA1}) so that if $n$ is its relative dimension, then $b^! M = \bar{b}^\ast R^n\Hom(b_\ast \mathcal{O}_X, M)$ for a Cartier module or crystal $M$.

\begin{Le}
\label{ShriekCompatibilityFiniteSmooth}
Let $b: X \to Y$ and $g: Y \to Z$ be morphisms of $F$-finite schemes with $b$ a closed immersion and $g$ and $f = gb$ smooth. Then one has an isomorphism of Cartier modules (or crystals) $b^! g^! M \cong f^!M$.
\end{Le}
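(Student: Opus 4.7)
The underlying quasi-coherent isomorphism $b^!g^!M \cong (gb)^!M$ is the classical transitivity of Grothendieck's twisted inverse image, so the content of the lemma is that this isomorphism intertwines the two Cartier structures defined in Theorem~\ref{CartierShriek}. I plan to check this locally and in coordinates, then reduce to a direct computation in the model case of a coordinate subspace in affine space.

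First I would localize to the affine situation and, using that $g$ is smooth, factor it (after further étale localization on $Y$) as $Y \xrightarrow{\varphi} \mathbb{A}^m_Z \xrightarrow{g'} Z$ with $\varphi$ étale. Since $f = gb$ is smooth and $b$ is a closed immersion, $b$ is a regular immersion of codimension $n = m - k$ where $k$ is the relative dimension of $f$. The key geometric step is to choose the factorization so that $b$ becomes a coordinate immersion: I would take local étale coordinates $\tilde{x}_1,\ldots,\tilde{x}_k$ for $X/Z$, lift them to functions on $Y$ (possible because $b$ is a closed immersion), and adjoin local generators $t_1,\ldots,t_n$ of the ideal of $b$. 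The conormal sequence
\[
0 \to I/I^2 \to b^*\Omega^1_{Y/Z} \to \Omega^1_{X/Z} \to 0,
\]
which is exact because $X/Z$ is smooth, guarantees that $(t_1,\ldots,t_n,\tilde{x}_1,\ldots,\tilde{x}_k)$ define an étale map $\varphi \colon Y \to \mathbb{A}^m_Z$, and by construction $b$ is cut out by $(t_1,\ldots,t_n)$. Setting $V = \mathbb{A}^k_Z \subset \mathbb{A}^m_Z$ to be the coordinate subspace, one gets a cartesian square with $\bar{\varphi} \colon X \to V$ étale, realising $b$ as the base change of $V \hookrightarrow \mathbb{A}^m_Z$.

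Next I would use that étale base change for $f^!$ acts compatibly on both sides: the isomorphism $\varphi^*F^e_*(-) \cong F^e_*\varphi^*(-)$ from Theorem~\ref{CartierShriek}(a) is natural in the underlying module, and so transports Cartier structures through the étale part verbatim. This reduces the verification to the model situation $g' \colon \mathbb{A}^m_Z \to Z$, $b \colon \mathbb{A}^k_Z \hookrightarrow \mathbb{A}^m_Z$ the inclusion $t_1 = \cdots = t_n = 0$, and $f' = g' b$ the projection $\mathbb{A}^k_Z \to Z$.

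In this model case, the explicit formulas in Theorem~\ref{CartierShriek} make the verification routine: on $g'^!M = M[\mathbf{t},\tilde{\mathbf{x}}]\,d\mathbf{t}\,d\tilde{\mathbf{x}}$ the smooth Cartier sends $m\, t^{\mathbf{a}}\tilde{x}^{\mathbf{c}}\,d\mathbf{t}\,d\tilde{\mathbf{x}}$ to $\kappa(m)\,t^{(\mathbf{a}+\mathbf{1})/p-\mathbf{1}}\tilde{x}^{(\mathbf{c}+\mathbf{1})/p-\mathbf{1}}\,d\mathbf{t}\,d\tilde{\mathbf{x}}$, while $b^!(-)$ prepends multiplication by $(t_1\cdots t_n)^{p-1}$ and reduces modulo $(t_1,\ldots,t_n)$. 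The factor $(t_1\cdots t_n)^{p-1}$ is exactly what is needed: on representatives with $\mathbf{a}=\mathbf{0}$ it produces exponent $((p-1)+1)/p - 1 = 0$ in each $t_i$, leaving $\kappa(m)\,\tilde{x}^{(\mathbf{c}+\mathbf{1})/p-\mathbf{1}}\,d\mathbf{t}\,d\tilde{\mathbf{x}}$, which matches the smooth Cartier on $f'^!M = M[\tilde{\mathbf{x}}]\,d\tilde{\mathbf{x}}$ under the identification dropping the $d\mathbf{t}$-factor. The main obstacle is really the coordinate choice in the second paragraph; once it is in place, the remaining computation is purely formal.
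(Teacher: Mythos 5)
Your proposal is correct and follows essentially the same route as the paper: both étale-localize the smooth factorization so that $b$ becomes the base change of a coordinate subspace $\mathbb{A}^k_Z \hookrightarrow \mathbb{A}^m_Z$ (the paper cites SGA1, Th\'eor\`eme II.4.10, where you argue via the conormal sequence), transport the Cartier structures through the étale part, and then verify the model case by the explicit $(t_1\cdots t_n)^{p-1}$ computation. The only cosmetic difference is that the paper records the étale compatibility as a separate explicit check ($\psi^! i^! \cong b^!\varphi^!$) rather than invoking naturality.
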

\begin{proof}
By \cite[Proposition III.8.4]{hartshorneresidues} there is an underlying isomorphism of coherent sheaves. It remains to verify that this is compatible with Cartier structures which is local. Hence, we may assume that $b$ is a complete intersection and $X = \Spec S/I$, $Y = \Spec S$, $Z = \Spec R$. Further we may assume that $g$ and $f= gb$ are standard smooth with factorizations $g  = s \circ \varphi$ and $f =t \circ \psi$. Moreover, if $x_1, \ldots, x_n$ are coordinates for $g$, then we may assume that the closed immersion $b$ is given by modding out $x_{m+1}, \ldots, x_n$ (cf. \cite[Th\'eor\`eme II.4.10]{SGA1}).

We therefore obtain the following commutative diagram
\[\begin{xy}
\xymatrix{ \Spec S/I \ar[r]^b \ar[d]^\psi & \Spec S \ar[d]^\varphi \\
\mathbb{A}^m_R \ar[r]^i \ar[dr]^t & \mathbb{A}^n_R \ar[d]^s\\
& \Spec R }
\end{xy}\]
where the square is Cartesian. We will verify that $t^! \cong i^! s^!$ and $\psi^! i^! \cong b^! \varphi^!$ are isomorphisms of Cartier modules. Then using \cite[Lemma 4.4]{staeblerunitftestmoduln} we obtain $(t \psi)^! \cong \psi^! t^! \cong \psi^! i^! s^! \cong b^! \varphi^! s^! \cong b^! (s \varphi)^!$. 

By \cite[Example 3.3.12]{blickleboecklecartiercrystals} the Cartier structure on $b^! N = N/IN$ is given by $\kappa \cdot (x_{m+1} \cdots x_n)^{p-1}$, where $\kappa$ is the Cartier structure on $N$ and $I = (x_{m+1}, \ldots, x_n)$. It is now straightforward to verify that $t^! \cong i^! s^!$ is compatible with Cartier structures. Next, we verify that $\psi^! i^! \cong b^! \varphi^!$ is compatible with Cartier structures.

So let $(M, \kappa_M)$ be a Cartier module.
We have $b^! \varphi^! M = M \otimes_{R[x_1, \cdots, x_n]} S \otimes_S S/I$, where the Cartier structure is given by \[m \otimes 1 \otimes s^p \mapsto \kappa(m \otimes 1 \otimes x_{m+1}^{p-1} \cdots x_n^{p-1} s^p) = \kappa_M(x_{m+1}^{p-1} \cdots x_n^{p-1} m) \otimes 1 \otimes s.\] On the other hand, the Cartier structure on \[\psi^! i^! M = M/(x_{m+1}, \ldots, x_n)M \otimes_{R[x_1, \ldots, x_m]} S/IS\] is given by \[m + (x_{m+1}, \ldots, x_m)M \otimes s^p \mapsto \kappa_M(x_{m+1}^{p-1} \cdots x_n^{p-1} m) + (x_{m+1}, \ldots, x_m)M \otimes s.\] These Cartier structures are clearly interchanged by  the natural isomorphism $b^! \varphi^! M \to \psi^! i^! M$.
\end{proof}

\begin{Le}
\label{SomeLemma}
Let $f: X \to Y$ be a smooth morphism of embeddable schemes. Then there is a commutative diagram
\[\begin{xy}
\xymatrix{X \ar[r]^-{a} \ar[d]^f & \tilde{X} \ar[d]^g\\ Y  \ar[r]^i & \tilde{Y}}
\end{xy}\]
with $g$ smooth and $a, i$ closed immersions and $\tilde{X}, \tilde{Y}$ smooth. For any such diagram one has a natural isomorphism $a_\ast f^! \cong a_\ast a^! g^! i_\ast$ in the category of Cartier crystals or modules.
\end{Le}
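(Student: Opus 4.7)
The diagram is constructed as follows. Since $X$ and $Y$ are embeddable, we may choose closed immersions $\iota\colon X \hookrightarrow X_0$ and $i\colon Y \hookrightarrow \tilde{Y}$ with $X_0$ and $\tilde{Y}$ smooth over $k$. Set $\tilde{X} := X_0 \times_k \tilde{Y}$, which is smooth over $k$, let $g\colon \tilde{X} \to \tilde{Y}$ be the second projection, and put $a := (\iota,\, i\circ f)\colon X \to \tilde{X}$. Then $a$ factors as $X \xrightarrow{\Gamma_{i\circ f}} X \times_k \tilde{Y} \xrightarrow{\iota \times \id_{\tilde{Y}}} \tilde{X}$; the graph $\Gamma_{i\circ f}$ is a closed immersion because $\tilde{Y}/k$ is separated, and $\iota \times \id_{\tilde{Y}}$ is a closed immersion as a base change of $\iota$, so $a$ is a closed immersion. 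The identity $g \circ a = i \circ f$ holds by construction.

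Now suppose we are given any diagram as in the statement. The idea is to reduce to the Cartesian situation by forming
\[
\begin{xy}\xymatrix{Z \ar[r]^{i'} \ar[d]_{g'} & \tilde{X} \ar[d]^g \\ Y \ar[r]^i & \tilde{Y}} \end{xy}
\]
with $Z := \tilde{X} \times_{\tilde{Y}} Y$, so that $g'$ is smooth (base change of $g$) and $i'$ is a closed immersion (base change of $i$). The universal property of the fibre product yields a unique $a'\colon X \to Z$ with $i' \circ a' = a$ and $g' \circ a' = f$; since $a = i' \circ a'$ is a closed immersion and $i'$ is separated, $a'$ is a closed immersion as well. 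Because $g'$ and $f = g' \circ a'$ are both smooth, Lemma \ref{ShriekCompatibilityFiniteSmooth} yields a natural isomorphism $f^! \cong (a')^! (g')^!$ of Cartier modules (hence of Cartier crystals).

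It remains to identify $a^! g^! i_\ast$ with $(a')^! (g')^!$. Three standard ingredients enter: (i) for the closed immersion $i'$ the counit $(i')^! i'_\ast \to \id$ is an isomorphism on $Crys(Z)$, which is checked exactly as in the proof of \cite[Theorem 5.12(c)]{schedlmeiercartierpervers}; (ii) for the composition $a = i' \circ a'$ of two closed immersions one has $a^! \cong (a')^! (i')^!$ as functors on Cartier modules, the underlying isomorphism being a special case of \cite[Proposition III.8.4]{hartshorneresidues} and the Cartier compatibility following from a local complete intersection calculation as in Lemma \ref{ShriekCompatibilityFiniteSmooth}; and (iii) the smooth base change isomorphism $g^! i_\ast \cong i'_\ast (g')^!$ attached to the Cartesian square above. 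Granted these,
\[
a^! g^! i_\ast \;\cong\; (a')^! (i')^! i'_\ast (g')^! \;\cong\; (a')^! (g')^! \;\cong\; f^!,
\]
and applying $a_\ast$ yields the claim.

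The main obstacle is (iii). On underlying quasi-coherent $\mathcal{O}$-modules the isomorphism $g^\ast i_\ast \cong i'_\ast (g')^\ast$ is standard flat base change, and it intertwines the canonical identification $(i')^\ast \omega_g \cong \omega_{g'}$. The compatibility with Cartier structures is local; one covers $\tilde{X}$ by open affines on which $g$ is standard smooth and then applies the explicit description of the Cartier action on $g^! = g^\ast(-)\otimes\omega_g$ recalled in the elaboration after Theorem \ref{CartierShriek}. The resulting verification is a bookkeeping calculation in the same spirit as Claims \ref{SmoothEquivalenceSmoothClaim1} and \ref{SmoothEquivalenceSmoothClaim2} in the proof of Theorem \ref{SmoothEquivalenceSmooth}.
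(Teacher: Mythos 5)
Your argument is correct and follows essentially the same route as the paper: the same product embedding $\tilde{X} = X' \times_k \tilde{Y}$, the same reduction to the Cartesian square $Z = \tilde{X}\times_{\tilde{Y}} Y$, and the same chain $a^!g^!i_\ast \cong (a')^!(i')^!i'_\ast(g')^! \cong (a')^!(g')^! \cong f^!$ via Lemma \ref{ShriekCompatibilityFiniteSmooth}. The only cosmetic differences are that you verify $a$ is a closed immersion by the graph factorization rather than by affineness plus surjectivity on sections, and that you sketch a local verification of the base change isomorphism $g^!i_\ast \cong i'_\ast(g')^!$ where the paper simply cites \cite[Lemma 4.5]{staeblerunitftestmoduln}.
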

\begin{proof}
Choose smooth embeddings $i: Y \to \tilde{Y}$ and $c: X \to X'$. The morphisms $c$ and $i \circ f$ induce a morphism $a: X \to X' \times_k \tilde{Y} = \tilde{X}$. Denote by $pr_1$ the projection $\tilde{X} \to X'$. Since $c$ is affine and $c = pr_1 \circ a$ we conclude from \cite[II.1.6.2 (v)]{EGAII} that $a$ is affine. Since $c$ is surjective on sections we conclude the same for $a$ which implies that it is a closed immersion. %or use stacksproject tag 01QV
We may take $g$ to be the second projection $pr_2: \tilde{X} = X' \times_k \tilde{Y} \to \tilde{Y}$ which is smooth since it is a base change of $X' \to \Spec k$.

We come to the claimed isomorphism of functors. We have the following commutative diagram
\[\begin{xy}
\xymatrix{X \ar@/^/[drr]^-{a} \ar[dr]^b \ar@/_/[ddr]_f &&\\& \tilde{X}\times_{\tilde{Y}} Y \ar[r]^-{i'} \ar[d]^{g'} & \tilde{X} \ar[d]^g\\
&Y \ar[r]^i & \tilde{Y}}
\end{xy}\]
where for the pullback diagram one has $g^! i_\ast \cong i'_\ast g'^!$ by \cite[Lemma 4.5]{staeblerunitftestmoduln}. Note that $b$ is a closed immersion, so that Lemma \ref{ShriekCompatibilityFiniteSmooth} yields that $b^! g'^! \cong f^!$. Using these facts one computes \[a_\ast a^! g^! i_\ast \cong a_\ast a^! i'_\ast g'^! \cong a_\ast b^! i'^! i'_\ast g'^! \cong a_\ast b^! g'^! \cong a_\ast f^!\] which is the asserted isomorphism.
\end{proof}

\begin{Bem}
Note that we have not used the coherence assumption of $M$ in Lemma \ref{ShriekCompatibilityFiniteSmooth} anywhere. In particular, the assertion of Lemma \ref{ShriekCompatibilityFiniteSmooth} continues to hold in the category of quasi-coherent Cartier modules or the category $QCrys(X)$. Deriving we also obtain corresponding statements for $D^b(QCrys(X))$ and its full subcategory $D^b_{crys}(X)$. 
\end{Bem}

\begin{Prop}
\label{CartierSmoothPullbackCompatibleWithEmbeddings}
Let $f: X \to Y$ be a smooth morphism of embeddable schemes. Then there is a commutative diagram
\[\begin{xy}
\xymatrix{X \ar[r]^-{a} \ar[d]^f & \tilde{X} \ar[d]^g\\ Y  \ar[r]^i & \tilde{Y}}
\end{xy}\]
with $g$ smooth and $a, i$ closed immersions and $\tilde{X}, \tilde{Y}$ smooth. For any such diagram the composition of derived  functors $a_\ast f^!: D^b(QCrys(Y)) \to D^b(QCrys(X)) \to D^b(QCrys(\tilde{X}))$ is naturally isomorphic to $a_\ast a^! g^! i_\ast$. Likewise, we have a natural isomorphism $a_\ast f^! \cong a_\ast a^! g^! i_\ast$ if we restrict to the full subcategories $D^b_{crys}(?)$ throughout.
\end{Prop}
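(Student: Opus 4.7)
The plan is to upgrade the chain of isomorphisms produced in the proof of Lemma \ref{SomeLemma} from the abelian categories of Cartier modules/crystals to the derived categories $D^b(QCrys(?))$ and $D^b_{crys}(?)$. Since the existence of the diagram is already contained in Lemma \ref{SomeLemma}, only this functorial upgrade remains.

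The key observation is that the only functor appearing in the statement that fails to be exact is $a^!$: indeed $f^!$ and $g^!$ are exact because $f$ and $g$ are smooth (each is a flat pullback twisted by the relative canonical bundle and suitably shifted), while $i_\ast$ and $a_\ast$ are exact because $i$ and $a$ are closed immersions. Their derived extensions therefore act termwise on complexes, and only $a^!$ (together with the auxiliary $b^!$, $i'^!$ appearing in the factorization $a = i' \circ b$ of Lemma \ref{SomeLemma}) must be derived in the $R\Hom$ sense of Theorem \ref{CartierShriek}(b). I would then retrace the four steps in the proof of Lemma \ref{SomeLemma} and check that each lifts to the derived category: (i) the base change $g^! i_\ast \cong i'_\ast g'^!$ from \cite[Lemma 4.5]{staeblerunitftestmoduln} is an identity of exact functors, hence lifts termwise; (ii) the composition of shrieks $Ra^! \cong Rb^! \circ Ri'^!$ for the composite finite map $a = i'b$ is the standard Grothendieck-duality compatibility, whose Cartier-linearity is forced by the local formulas of Theorem \ref{CartierShriek}(b); (iii) the unit $\id \cong Ri'^! \circ i'_\ast$ is formal from the full faithfulness of $i'_\ast$ together with the adjunction $i'_\ast \dashv Ri'^!$; and (iv) $Rb^! \circ Rg'^! \cong Rf^!$ is precisely the derived version of Lemma \ref{ShriekCompatibilityFiniteSmooth}, whose validity in $D^b(QCrys)$ is granted by the remark immediately preceding the present proposition.

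Splicing (i)--(iv) yields the chain
\[
  a_\ast a^! g^! i_\ast \;\cong\; a_\ast b^! i'^! i'_\ast g'^! \;\cong\; a_\ast b^! g'^! \;\cong\; a_\ast f^!
\]
in $D^b(QCrys(\tilde X))$, which is the required isomorphism. The restriction to $D^b_{crys}(?)$ is then automatic, since each of $f^!$, $g^!$, $i_\ast$, $a_\ast$ preserves coherence by Theorem \ref{CartierShriek}, so the full subcategory of complexes with coherent cohomology is stable under the entire chain. The main obstacle I foresee is step (ii): although the underlying $\mathcal{O}$-linear isomorphism is classical (\cite[Proposition III.8.4]{hartshorneresidues}), verifying that it intertwines the Cartier actions prescribed by Theorem \ref{CartierShriek}(b) requires a careful local computation with a resolution of $a_\ast \mathcal{O}_X$, since $a$ is not assumed to be lci; this is a quasi-coherent and derived analogue of the Cartier bookkeeping already carried out in the proof of Lemma \ref{ShriekCompatibilityFiniteSmooth}.
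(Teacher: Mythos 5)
Your proposal is correct and follows essentially the same route as the paper, whose proof is simply to combine Lemma \ref{SomeLemma} with the preceding Remark (the chain of isomorphisms there never uses coherence, and the only non-exact functors are the finite-morphism shrieks, so everything derives). Your worry in step (ii) is not really an obstacle: the compatibility of the finite-morphism $R\Hom(a_\ast\mathcal{O}_X,-)$ upper shriek with composition, together with its Cartier structure, is already part of the cited framework of Theorem \ref{CartierShriek}(b) and does not require $a$ to be lci.
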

\begin{proof}
Lemma \ref{SomeLemma} together with the Remark above yields the corresponding statement in $D^b(QCrys)$ and hence also in $D^b_{crys}(X)$.
\end{proof}

\begin{Le}
\label{SmoothInverseImageCompatibleWithEmbeddings}
Let $f: X \to Y$ be a smooth morphism of embeddable schemes. Then there is a commutative diagram
\[\begin{xy}
\xymatrix{X \ar[r]^-{a} \ar[d]^f & \tilde{X} \ar[d]^g\\ Y  \ar[r]^i & \tilde{Y}}
\end{xy}\]
with $g$ smooth and $a, i$ closed immersions and $\tilde{X}, \tilde{Y}$ smooth. For any such diagram the composition of derived  functors $a_! f^{-1}: D^b_c(Y) \to D^b_c(X) \to D^b_c(\tilde{X})$ is naturally isomorphic to $a_! a^{-1} g^{-1} i_!$.
\end{Le}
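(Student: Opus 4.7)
The plan is to transpose the proof of Lemma \ref{SomeLemma} to the constructible side, exploiting the parallel between the $(f^!,f_\ast)$-formalism on Cartier crystals and the $(f^{-1},f_!)$-formalism on $D^b_c$. The existence of a diagram as in the statement is already provided by Lemma \ref{SomeLemma}, so only the natural isomorphism of functors requires argument.

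First I would introduce the fibre product $W := \tilde{X} \times_{\tilde{Y}} Y$ and factor $a$ as $X \xrightarrow{b} W \xrightarrow{i'} \tilde{X}$, where $b$ is the morphism induced by $f$ and $a$ via the universal property of $W$, and $i'$ is the first projection. That $b$ is a closed immersion was verified in the proof of Lemma \ref{SomeLemma}, and $i'$ is a closed immersion as a base change of the closed immersion $i$. Writing $g' : W \to Y$ for the second projection, the square formed by $g,g',i,i'$ is Cartesian, with $g, g'$ smooth and $g' \circ b = f$.

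Next I would apply proper base change for constructible $\mathbb{F}_p$-sheaves to this Cartesian square (valid since $i$ is a closed immersion, hence proper), yielding a natural isomorphism $g^{-1} i_! \cong i'_! g'^{-1}$. Combining this with $a^{-1} \cong b^{-1} i'^{-1}$ and with $i'^{-1} i'_! \cong \id$ (which holds for the closed immersion $i'$, since then $i'_! = i'_\ast$), one computes
\[ a^{-1} g^{-1} i_! \;\cong\; b^{-1} i'^{-1} i'_! g'^{-1} \;\cong\; b^{-1} g'^{-1} \;\cong\; (g'b)^{-1} \;=\; f^{-1}. \]
Applying $a_!$ to both sides gives the desired isomorphism $a_! a^{-1} g^{-1} i_! \cong a_! f^{-1}$.

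The only non-formal ingredient is proper base change for constructible $\mathbb{F}_p$-sheaves, which is part of the standard \'etale-sheaf formalism and is implicit elsewhere in the paper; I therefore do not anticipate a genuine obstacle. The remainder of the argument is strictly dual to the Cartier-side manipulation already carried out for Lemma \ref{SomeLemma}.
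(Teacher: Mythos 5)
Your proof is correct and follows essentially the same route as the paper, which simply says the statement "follows using the proper base change theorem along the lines of" the Cartier-crystal analogue (Proposition \ref{CartierSmoothPullbackCompatibleWithEmbeddings}); you have merely written out explicitly the fibre-product factorization, the base-change isomorphism $g^{-1}i_! \cong i'_! g'^{-1}$, and the identities $i'^{-1}i'_! \cong \id$ and $b^{-1}g'^{-1} \cong f^{-1}$ that the paper leaves implicit.
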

\begin{proof}
This follows using the proper base change theorem (e.g.\ \cite[Corollary VI.2.3]{milne}) along the lines of \ref{CartierSmoothPullbackCompatibleWithEmbeddings}. Note that the base change theorem holds for arbitrary abelian torsions sheaves so that we do not need to worry about the presentation of $D^b_c(\tilde{X})$ in this case.
\end{proof}

\begin{Theo}
\label{derivedequivtransformsshriek}
For embeddable schemes $X,Y$ and a smooth morphism $f: X \to Y$ if $\Phi_X = \Sol_X \circ \Sigma_X$ denotes the chain of equivalences $D^b_{crys}(X) \to D^b_{unit\,F}(X) \to D^b_c(X)$ (as in Corollary \ref{CrysPervEquivalence}) and similarly $\Phi_Y = \Sol_Y \circ \Sigma_Y$, then $\Phi_X \circ f^! \cong f^{-1} \circ \Phi_Y$.
\end{Theo}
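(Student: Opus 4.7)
The plan is to reduce the statement to two ingredients already established: the compatibility of $\Phi = \Sol \circ \Sigma$ with closed immersions (Corollary \ref{CrysPervEquivalence}) and its compatibility with smooth morphisms of \emph{smooth} schemes (Corollary \ref{SmoothDerivedSmoothEquivalence} combined with Theorem \ref{unitFConstructibleAntiequivalence}(a), which together yield $\Phi_{\tilde{X}} \circ g^! \cong g^{-1} \circ \Phi_{\tilde{Y}}$ for $g \colon \tilde{X} \to \tilde{Y}$ smooth between smooth schemes). I would choose a diagram
\[\begin{xy} \xymatrix{X \ar[r]^-{a} \ar[d]^f & \tilde{X} \ar[d]^g \\ Y \ar[r]^i & \tilde{Y}} \end{xy}\]
as provided by Lemma \ref{SomeLemma}, with $\tilde{X}, \tilde{Y}$ smooth, $a, i$ closed immersions, and $g$ smooth. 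Since closed immersions are proper, $a_! = a_*$ on $D^b_c$, and this functor is fully faithful; it therefore suffices to produce a natural isomorphism $a_! \circ \Phi_X \circ f^! \cong a_! \circ f^{-1} \circ \Phi_Y$ and then descend.

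For the first functor I would combine $a_! \circ \Phi_X \cong \Phi_{\tilde{X}} \circ a_*$ (Corollary \ref{CrysPervEquivalence}) with $a_* \circ f^! \cong a_* a^! g^! i_*$ (Proposition \ref{CartierSmoothPullbackCompatibleWithEmbeddings}) to obtain
\[a_! \Phi_X f^! \;\cong\; \Phi_{\tilde{X}} a_* f^! \;\cong\; \Phi_{\tilde{X}} a_* a^! g^! i_*.\]

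For the second functor I would invoke Lemma \ref{SmoothInverseImageCompatibleWithEmbeddings} to rewrite $a_! f^{-1} \cong a_! a^{-1} g^{-1} i_!$, and then move $\Phi_Y$ leftwards step by step: closed-immersion compatibility at $i$ turns $i_! \Phi_Y$ into $\Phi_{\tilde{Y}} i_*$; the smooth compatibility for $g$ between smooth schemes turns $g^{-1} \Phi_{\tilde{Y}}$ into $\Phi_{\tilde{X}} g^!$; and closed-immersion compatibility at $a$, applied twice (first as $a^{-1} \Phi_{\tilde{X}} \cong \Phi_X a^!$, then as $a_! \Phi_X \cong \Phi_{\tilde{X}} a_*$), turns $a_! a^{-1} \Phi_{\tilde{X}}$ into $\Phi_{\tilde{X}} a_* a^!$. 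The chain
\[a_! f^{-1} \Phi_Y \;\cong\; a_! a^{-1} g^{-1} i_! \Phi_Y \;\cong\; a_! a^{-1} g^{-1} \Phi_{\tilde{Y}} i_* \;\cong\; a_! a^{-1} \Phi_{\tilde{X}} g^! i_* \;\cong\; \Phi_{\tilde{X}} a_* a^! g^! i_*\]
therefore matches the expression from the other side, giving the desired isomorphism after $a_!$.

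The conclusion is then just full faithfulness of $a_!$. The main obstacle I expect is not conceptual but bookkeeping: one must check that each link in the above chain is an isomorphism of \emph{functors} (i.e.\ natural in the input crystal), and that descent through $a_!$ is compatible with this naturality. All of the building blocks are naturally constructed in the cited references, so chaining them is legitimate, but writing this out carefully — particularly the two successive invocations of the closed-immersion compatibility for $a$ in opposite directions on $a^!$ and $a_*$ — is where one has to be most careful.
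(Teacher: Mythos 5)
Your proposal is correct and follows essentially the same route as the paper: embed via the square of Lemma \ref{SomeLemma}, rewrite both $a_\ast f^!$ and $a_! f^{-1}$ as $a_\ast a^! g^! i_\ast$ resp.\ $a_! a^{-1} g^{-1} i_!$ (Proposition \ref{CartierSmoothPullbackCompatibleWithEmbeddings} and Lemma \ref{SmoothInverseImageCompatibleWithEmbeddings}), move $\Phi$ through the chain using the closed-immersion and smooth-between-smooth compatibilities, and descend along $a$. The paper phrases the final descent via $a^{-1} a_! \cong \id$ and $a^! a_\ast \cong \id$ rather than full faithfulness of $a_!$, but this is the same mechanism.
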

\begin{proof}
Note that $\Phi \circ i_\ast \cong i_! \circ \Phi$ and $\Phi \circ i^! \cong i^{-1} \circ \Phi$ for locally closed immersions by \cite[Theorem 5.12 (c)]{schedlmeiercartierpervers}. By Corollary \ref{SmoothDerivedSmoothEquivalence}, \cite[Theorem 11.3]{emertonkisinrhunitfcrys} we have $\Phi_X \circ f^! \cong f^{-1} \circ \Phi_Y$ for a morphism $f: X \to Y$ between \emph{smooth} schemes. Using the setup of Proposition \ref{CartierSmoothPullbackCompatibleWithEmbeddings} and combining it with Lemma \ref{SmoothInverseImageCompatibleWithEmbeddings} above we obtain that  $\Phi \circ (a_\ast a^! g^! b_\ast) \cong (a_! a^{-1} g^{-1} b_!) \circ \Phi$. Recall that $i^{-1} i_! \cong \id$ for any closed immersion $i: Z \to X$ in $D^b_c(Z)$ and similarly $i^! i_\ast \cong \id$ in $D^b_{crys}(Z)$. Using this observation with $i = a$ we obtain that $f^!$ corresponds to $f^{-1}$ via $\Phi$.
\end{proof}

\begin{Ko}
\label{CrystoPervSmoothEmbeddable}
For embeddable schemes $X,Y$ and a smooth morphism $f: X \to Y$ under the chain of equivalences $Crys(?) \to$ unit $F(?) \to Perv_c(?)$ (as in Corollary \ref{CrysPervEquivalence}), where $Perv_c(?)$ denotes perverse constructible sheaves on $?$ the functor $f^!$ corresponds to $f^{-1}$.
\end{Ko}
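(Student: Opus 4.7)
The plan is to deduce this Corollary directly from Theorem \ref{derivedequivtransformsshriek} by restricting the derived statement to the hearts of the relevant $t$-structures.

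First I would recall that, by the conventions set up after Proposition \ref{unitFDerivedCatEquivalent}, the functor $f^!$ on $D^b_{crys}$ is implicitly shifted by the relative dimension of $f$, and this shifted $f^!$ is exact between the hearts of the trivial $t$-structures, i.e.\ it restricts to a functor $Crys(Y) \to Crys(X)$. On the constructible side, for a smooth morphism $f$ between embeddable schemes, the shifted pullback $f^{-1}[\dim X - \dim Y]$ is $t$-exact for the middle perversity; this is a standard property of smooth pullbacks in the theory of perverse constructible $\mathbb{F}_p$-sheaves, and it says that $f^{-1}$ restricts to a functor $Perv_c(Y) \to Perv_c(X)$.

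Next I would invoke Corollary \ref{CrysPervEquivalence}, which states that the anti-equivalence $\Phi = \Sol \circ \Sigma$ sends the trivial $t$-structure on $D^b_{crys}(?)$ to the middle perversity on $D^b_c(?)$, and hence restricts to an (anti-)equivalence of the hearts $Crys(?) \to Perv_c(?)$.

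Finally I would combine these two facts with Theorem \ref{derivedequivtransformsshriek}: the isomorphism of functors $\Phi_X \circ f^! \cong f^{-1} \circ \Phi_Y$ holds at the level of bounded derived categories, and since on both sides the respective $f^!$ and $f^{-1}$ preserve the hearts and $\Phi$ identifies those hearts, the natural isomorphism restricts to an isomorphism of functors between the abelian categories, which is exactly the claim. There is essentially no obstacle, as all the technical content was carried out at the derived level in Theorem \ref{derivedequivtransformsshriek}; this corollary is just the observation that the exact shifted functors on both sides pass to the hearts compatibly.
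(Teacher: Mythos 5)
Your proposal is correct and follows essentially the same route as the paper: the paper's proof likewise deduces the corollary from Theorem \ref{derivedequivtransformsshriek} by noting that the derived equivalence carries the trivial $t$-structure to the perverse $t$-structure, so the derived isomorphism restricts to the hearts. You merely spell out in more detail the $t$-exactness of the (shifted) functors $f^!$ and $f^{-1}$, which the paper leaves implicit.
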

\begin{proof}
Since under the derived equivalence $D^b_{crys}(X) \to D^b_{\text{c}}(X)$ the trivial $t$-structure is sent to the perverse $t$-structure (\cite[Corollary 5.14]{schedlmeiercartierpervers}) this follows from Theorem \ref{derivedequivtransformsshriek}.
\end{proof}

Recall that $R\Gamma_X$ for a closed subscheme $X$ of a scheme $X'$ denotes the (derived) local cohomology functor and recall our convention that morphisms in embeddable schemes are $k$-linear (cf. Conventions just before Section \ref{Cartiercrystals}). See also our Conventions for the notion of relative dimension.

\begin{Def}
\label{DefGeneralPullbackunitF}
Let $f: X \to Y$ be a smooth morphism of embeddable schemes over a fixed $F$-finite base field $k$. If $i: Y \to \tilde{Y}$ and $a: X \to \tilde{X}$ are $k$-embeddings into smooth schemes such that $if = ga$ for some smooth $k$-morphism $g: \tilde{X} \to \tilde{Y}$, then for any unit $F$-module $M$ on $Y$ we define $f^! M$ as $R^n\Gamma_X g^! M$, where $n$ is the relative dimension of the embedding $a$ (see our Conventions for the notion of relative dimension). Note that, by definition, $M$ is a unit $F$-module on $\tilde{Y}$ such that $j^! M = 0$, where $j: \tilde{Y} \setminus Y \to \tilde{Y}$.
\[\begin{xy}\xymatrix{X \ar[r]^f \ar[d]^a & Y \ar[d]^i \\ \tilde{X} \ar[r]^g & \tilde{Y}}\end{xy}\]
\end{Def}

\begin{Theo}
\label{ShriekSmoothEmbeddableCommute}
With the notation of \ref{DefGeneralPullbackunitF} if $\Sigma_?$ denotes the equivalence $Crys(?) \to unit$ $F(?)$ (see Theorem \ref{CartierunitFequivalence}), then $\Sigma_X \circ f^! \cong f^! \circ \Sigma_Y$. In particular, the definition of $f^!$ for unit $F$-modules does not depend on the embedding.
\end{Theo}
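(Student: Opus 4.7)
The plan is to reduce to the smooth case of Corollary \ref{SmoothDerivedSmoothEquivalence}, combined with Schedlmeier's compatibility of $\Sigma$ with closed immersions from \cite[Theorem 5.12]{schedlmeiercartierpervers}. Recall that by construction, for an embeddable scheme $X$ with closed embedding $a\colon X \to \tilde{X}$ into a smooth scheme, the equivalence $\Sigma_X$ is given by $\Sigma_{\tilde{X}} \circ a_\ast$, and this is independent of the chosen embedding by \cite[Proposition 5.4]{schedlmeiercartierpervers}; the same holds on $Y$. Hence the claimed isomorphism amounts to a natural isomorphism of objects of $D^b_{unit\, F}(\tilde{X})$.

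The key input is Proposition \ref{CartierSmoothPullbackCompatibleWithEmbeddings}, which supplies a natural isomorphism $a_\ast f^! M \cong a_\ast a^! g^! i_\ast M$ in $D^b_{crys}(\tilde{X})$. I would apply the derived equivalence $\Sigma_{\tilde{X}}$ of Theorem \ref{CartierunitFequivalence} to both sides. The left hand side becomes $\Sigma_X(f^! M)$ viewed on $\tilde{X}$. On the right, I would push $\Sigma$ through each of the four functors in turn: past $i_\ast$, $a^!$, and $a_\ast$ using \cite[Theorem 5.12]{schedlmeiercartierpervers} for the closed immersions $i$ and $a$, and past $g^!$ using Corollary \ref{SmoothDerivedSmoothEquivalence} for the smooth morphism $g$ between the smooth schemes $\tilde{X}$ and $\tilde{Y}$. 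This yields a natural isomorphism
\[\Sigma_X(f^! M) \cong a_+ a^! g^! \Sigma_{\tilde{Y}}(i_\ast M) \cong a_+ a^! g^! \Sigma_Y(M)\]
in $D^b_{unit\, F}(\tilde{X})$, where the second identification is the defining realisation of $\Sigma_Y(M)$ as the unit $F$-module $\Sigma_{\tilde{Y}}(i_\ast M)$ on $\tilde{Y}$ supported on $Y$.

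To finish, I would identify $a_+ a^!$ with the derived local cohomology functor $R\Gamma_X$ on unit $F$-modules, as in the usual $\mathcal{D}$-module formalism for closed immersions. With the codimension-$n$ normalisation of $a^!$ in both categories that we adopted throughout the paper (so that $a^!$ preserves the heart), this reads $a_+ a^! = R^n\Gamma_X$ on objects concentrated in a single cohomological degree. Since $g^!\Sigma_Y(M)$ lies in the heart (the smooth shriek between smooth schemes being exact on unit $F$-modules), the right hand side evaluates to $R^n\Gamma_X g^! \Sigma_Y(M)$, which is $f^!\Sigma_Y(M)$ by Definition \ref{DefGeneralPullbackunitF}. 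Independence of the embedding is then immediate: the left hand side $\Sigma_X(f^! M)$ is intrinsic to $X$, so two sets of embedding data yield canonically isomorphic functors $\Sigma_Y(M) \mapsto f^!_1 \Sigma_Y(M)$ and $\Sigma_Y(M) \mapsto f^!_2 \Sigma_Y(M)$, and essential surjectivity of $\Sigma_Y$ upgrades this to a canonical isomorphism $f^!_1 \cong f^!_2$ on all of unit $F(Y)$.

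I expect the main technical nuisance to be the careful bookkeeping of cohomological shifts and the clean statement of the identification $a_+ a^! = R^n\Gamma_X$ in the heart of $D^b_{unit\, F}(\tilde{X})$; the substantive mathematical content of the theorem has been absorbed into Proposition \ref{CartierSmoothPullbackCompatibleWithEmbeddings}, and thence into Theorem \ref{SmoothEquivalenceSmooth} together with Lemmas \ref{ShriekCompatibilityFiniteSmooth} and \ref{SomeLemma}.
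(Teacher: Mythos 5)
Your proposal is correct and follows essentially the same route as the paper: both reduce to the smooth case via the commuting square of embeddings, invoke the key isomorphism $a_\ast f^! \cong a_\ast a^! g^! i_\ast$ from Lemma \ref{SomeLemma}, and then push $\Sigma$ through $i_\ast$, $g^!$, $a^!$ and $a_\ast$ using \cite[Theorem 5.12]{schedlmeiercartierpervers} and Theorem \ref{SmoothEquivalenceSmooth}. The only cosmetic difference is that you identify $a_+a^!$ with $R^{n}\Gamma_X$ directly on the unit $F$-module side, whereas the paper makes the identification $a_\ast a^! \cong R^{n_X}\Gamma_X$ in $Crys(\tilde{X})$ and then uses the compatibility of $\Sigma_{\tilde{X}}$ with $R^{n_X}\Gamma_X$ -- these amount to the same thing given the cited compatibilities.
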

\begin{proof}
Recall that by definition the equivalence $\Sigma_Y$ is given by $\Sigma_{\tilde{Y}} \circ i_\ast$ and similarly for $\Sigma_X$. Let us denote by $n_Y$ the relative dimension of $i$ and by $n_X$ the relative dimension of $a$.

Then we have we have $R^{n_Y}\Gamma_Y \cong i_\ast i^!$ in $Crys(\tilde{Y})$ by \cite[Lemma 3.2]{blicklestaeblerfunctorialtestmodules} and similarly for $Crys(\tilde{X})$. By loc.\ cit.\ and \cite[Lemma 5.7]{schedlmeiercartierpervers} we have $\Sigma_{\tilde{X}} R^{n_X} \Gamma_X \cong R^{n_X} \Gamma_X \Sigma_{\tilde{X}}$. Next note that whenever we have a morphism $g$ as in \ref{DefGeneralPullbackunitF}, then Lemma \ref{SomeLemma} shows that we have $a_\ast f^! \cong a_\ast a^! g^! i_\ast$.

Using Theorem \ref{SmoothEquivalenceSmooth} and the above statements we obtain \begin{align*} f^! \Sigma_Y &\cong R^n\Gamma_X g^! \Sigma_Y \cong R^{n_X}\Gamma_X g^! \Sigma_{\tilde{Y}} i_\ast \cong R^{n_X}\Gamma_X \Sigma_{\tilde{X}} g^! i_\ast \\&\cong \Sigma_{\tilde{X}} R^{n_X}\Gamma_X g^! i_\ast \cong \Sigma_{\tilde{X}} a_\ast a^! g^! i_\ast \cong \Sigma_{\tilde{X}}a_\ast f^! \cong \Sigma_X f^!.\end{align*}
\end{proof}

In order to obtain the corresponding statement for the derived category we need one more lemma.% Recall the notion of local nilpotence in Definition \ref{Deflocnilpotent}.% Let $X$ be an $F$-finite scheme. Call a quasi-coherent Cartier module $(M, \kappa)$ on $X$ \emph{locally nilpotent} if it is the union of quasi-coherent $\mathcal{O}_X$-submodules $N_i$ such that there is $e_i$ with $\kappa^{e_i} N_i = 0$. Note that we may alternatively also require the $N_i$ to be Cartier modules. Indeed, if $N$ is an $\mathcal{O}_X$-module such that $\kappa^e N = 0$, then $N' = \kappa N + \ldots + \kappa^{e-1} N$ is a Cartier module containing $N$ with $\kappa^e N' = 0$

\begin{Le}
\label{GammaDescription}
Let $X$ be an embeddable scheme and $i: Z \to X$ a closed immersion. Then for any quasi-coherent Cartier module $M$ on $X$ the cokernel of the natural inclusion $i_\ast R^0i^! M \subseteq H^0_Z(M)$ is locally nilpotent (see Definition \ref{Deflocnilpotent}). In particular, $i_\ast i^! \cong R\Gamma_Z$ in $D^b_{crys}(X)$.
\end{Le}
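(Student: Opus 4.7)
The plan is to prove the abelian statement about the cokernel by an explicit local computation with the Cartier relation, and then upgrade to the derived statement by means of an injective Cartier resolution.

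First I would reduce the first assertion to the affine case $X = \Spec A$, $Z = V(I)$, where $i_\ast R^0 i^! M = \Ann_M(I)$ and $H^0_Z(M) = \Gamma_I(M) = \bigcup_{n \geq 1} \Ann_M(I^n)$. Let $Q$ denote the cokernel. The central observation is the containment $\kappa(\Ann_M(I^n)) \subseteq \Ann_M(I^{\lceil n/p \rceil})$, which is immediate from the Cartier relation $r\kappa(m) = \kappa(r^p m)$: for $r \in I^{\lceil n/p \rceil}$ one has $r^p \in I^n$, so $r\kappa(m) = 0$. Iterating gives $\kappa^e(\Ann_M(I^n)) \subseteq \Ann_M(I)$ as soon as $p^e \geq n$. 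Writing $Q_n$ for the image of $\Ann_M(I^n)$ in $Q$, each $Q_n$ is a quasi-coherent Cartier submodule (since $\lceil n/p \rceil \leq n$), is nilpotent (killed by $\kappa^e$ once $p^e \geq n$), and the $Q_n$ exhaust $Q$. By Definition \ref{Deflocnilpotent}, $Q$ is locally nilpotent.

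For the second assertion I would choose an injective resolution $M \to J^\bullet$ in the category of quasi-coherent Cartier modules on $X$; these exist, and the forgetful functor to quasi-coherent $\mathcal{O}_X$-modules preserves injectives because it admits a left adjoint, namely the \lq\lq{}free Cartier module\rq\rq{} functor $N \mapsto \bigoplus_{e\geq 0} F^e_\ast N$ (with Cartier structure given by the canonical shift). Hence each $J^n$ is acyclic for both $\Gamma_Z$ and $i^!$, so $R\Gamma_Z M$ and $i_\ast R i^! M$ are represented by the Cartier complexes $\Gamma_Z J^\bullet$ and $i_\ast i^! J^\bullet$ respectively. The termwise natural inclusion yields a short exact sequence of Cartier complexes $0 \to i_\ast i^! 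J^\bullet \to \Gamma_Z J^\bullet \to Q^\bullet \to 0$ whose terms $Q^n$ are locally nilpotent by the first part applied to $J^n$. Since the locally nilpotent quasi-coherent Cartier modules form a Serre subcategory, the cohomology of $Q^\bullet$ is again locally nilpotent and therefore vanishes in $QCrys(X)$; hence $Q^\bullet \cong 0$ in $D^b(QCrys(X))$, yielding the desired isomorphism $i_\ast i^! M \cong R\Gamma_Z M$ in $D^b_{crys}(X)$.

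The main obstacle I expect is the bookkeeping of Cartier-linearity at each step: verifying that the filtration $\{Q_n\}$ of $Q$ really is by Cartier submodules, that the injective resolution in quasi-coherent Cartier modules is genuinely usable to compute both $R\Gamma_Z$ and $Ri^!$ as functors with Cartier structures, and that the termwise short exact sequence above lives inside the category of Cartier complexes. Ultimately all of this comes down to the single commutation rule $r\kappa = \kappa r^p$, but some care is required to track it through the identifications.
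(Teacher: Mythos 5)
Your proposal is correct and follows essentially the same route as the paper: reduce to the affine case and use the relation $r\kappa^e = \kappa^e r^{p^e}$ to show that $\kappa^e$ carries $\Ann_M(I^n)$ into $\Ann_M(I)$ once $p^e \geq n$, so the cokernel is exhausted by nilpotent Cartier submodules. Your treatment of the derived supplement via injective resolutions in quasi-coherent Cartier modules is a more detailed version of the paper's one-line remark that the statement passes to derived functors because $QCrys$ is the localization at locally nilpotent modules.
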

\begin{proof}
This is a local statement by \cite[Lemma 2.2.4]{blickleboecklecartiercrystals} so that we may reduce to $X = \Spec R$ being affine and $Z = \Spec R/I$ for an ideal $I$. Fix a section $m$ such that $I^k m = 0$ and note that also $N = R \cdot m$ is annihilated by $I^k$. Take any $e$ such that $p^e \geq k$. Then $I \kappa^e(N)  \subseteq \kappa^e(I^{p^e} N) = 0$ showing that the inclusion is a nil-isomorphism. The supplement follows by passing to derived functors since $QCrys$ is the localization of quasi-coherent Cartier modules at locally nilpotent ones.
\end{proof}

\begin{Ko}
Let $f: X \to Y$ be a smooth morphism of embeddable schemes. If $\Sigma_X$ denotes the equivalence from $D^b_{crys}(X) \to D^b_{unit\, F}(X)$ (see  Theorem \ref{CartierunitFequivalence}) and similarly for $\Sigma_Y$ then one has $f^! \circ \Sigma_Y \cong \Sigma_X \circ f^!$.
\end{Ko}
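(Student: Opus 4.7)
The plan is to reduce this derived statement to Theorem \ref{ShriekSmoothEmbeddableCommute}, following exactly the template used to deduce Corollary \ref{SmoothDerivedSmoothEquivalence} from Theorem \ref{SmoothEquivalenceSmooth}. With the $t$-structure conventions fixed in Section \ref{Cartiercrystals}, the functor $f^! \colon Crys(Y) \to Crys(X)$ is exact between the hearts (it is the shifted functor defined there so as to take hearts to hearts). Since $\Sigma_?$ is an equivalence of abelian categories by Theorem \ref{CartierunitFequivalence}, and Theorem \ref{ShriekSmoothEmbeddableCommute} supplies an isomorphism $\Sigma_X \circ f^! \cong f^! \circ \Sigma_Y$ on the hearts, it follows that $f^! \colon unit\, F(Y) \to unit\, F(X)$ (as defined by $R^{n_X}\Gamma_X \circ g^!$ in Definition \ref{DefGeneralPullbackunitF}) is also exact between hearts.

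Given this exactness, both composites extend uniquely to triangulated functors $D^b(Crys(Y)) \to D^b(unit\, F(X))$, and under Propositions \ref{CrystalsDerivedCatEquivalent} and \ref{unitFDerivedCatEquivalent} these become identified with the corresponding functors on $D^b_{crys}$ and $D^b_{unit\, F}$ respectively. The abelian-level natural isomorphism of Theorem \ref{ShriekSmoothEmbeddableCommute} then lifts uniquely to a natural isomorphism of triangulated functors, yielding the claimed isomorphism $f^! \circ \Sigma_Y \cong \Sigma_X \circ f^!$.

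The main point to monitor is the coherence between the derived functor $f^!$ on $D^b_{unit\, F}(Y)$ and the abelian functor of Definition \ref{DefGeneralPullbackunitF}; equivalently, one must know that $R\Gamma_X \circ g^!$, applied to objects of the heart, is concentrated in degree $n_X$, so that no information is lost by taking $R^{n_X}\Gamma_X$ in place of $R\Gamma_X$. This concentration is a consequence of Lemma \ref{GammaDescription} combined with the exactness of $g^!$ in the smooth-of-smooth case (as invoked for Corollary \ref{SmoothDerivedSmoothEquivalence}), and it is precisely the reason the formal argument above parallels the one in Corollary \ref{SmoothDerivedSmoothEquivalence} without additional complications.
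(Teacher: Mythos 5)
Your route is genuinely different from the paper's, and it has one underjustified step. The paper does not reduce to the abelian statement of Theorem \ref{ShriekSmoothEmbeddableCommute}; it simply reruns that theorem's chain of isomorphisms at the derived level, replacing the abelian identification $R^{n_X}\Gamma_X \cong a_\ast a^!$ (from \cite[Lemma 3.2]{blicklestaeblerfunctorialtestmodules}) by the derived identification $R\Gamma_X \cong a_\ast a^!$ of Lemma \ref{GammaDescription}; every other link in the chain (Corollary \ref{SmoothDerivedSmoothEquivalence}, the commutation of $\Sigma$ with $R\Gamma_X$, Proposition \ref{CartierSmoothPullbackCompatibleWithEmbeddings}) is already available in derived form. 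Your reduction via exactness, modeled on Corollary \ref{SmoothDerivedSmoothEquivalence}, can be made to work, but the coherence point you flag at the end is precisely where the content sits, and \lq\lq{}Lemma \ref{GammaDescription} combined with the exactness of $g^!$\rq\rq{} does not settle it: Lemma \ref{GammaDescription} converts $R\Gamma_X g^! M$ into $a_\ast a^! g^! M$, but $a: X \to \tilde{X}$ is a closed immersion with possibly singular source, so $a^!$ is not concentrated in a single cohomological degree on arbitrary objects of the heart of $D^b_{crys}(\tilde{X})$. The concentration you need is special to objects in the essential image of $g^! i_\ast$ and is obtained from $a^! g^! i_\ast \cong b^! g'^! \cong f^!$ together with the exactness of $f^!$, i.e.\ from Lemma \ref{SomeLemma} and Proposition \ref{CartierSmoothPullbackCompatibleWithEmbeddings}. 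Once you invoke those, you have effectively reproduced the paper's direct derived computation, so the detour through the abelian Theorem \ref{ShriekSmoothEmbeddableCommute} does not actually shorten the argument. With the coherence step justified as above, your proof is correct.
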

\begin{proof}
This proceeds in a similar fashion to Theorem \ref{ShriekSmoothEmbeddableCommute} using Lemma \ref{GammaDescription} instead of \cite[Lemma 3.2]{blicklestaeblerfunctorialtestmodules}.
\end{proof}

\section{Intermediate extensions and smooth pullbacks}
\label{IESection}

Throughout this section we fix some Cartier algebra $\mathcal{C}$ satisfying suitable finiteness conditions as discussed in Section \ref{Cartiercrystals}. Any Cartier module or crystal will be with respect to $\mathcal{C}$. When $\mathcal{C}$ is assumed to be principal we will indicate this by considering a Cartier module of the form $(M, \kappa)$. Recall the notion of \emph{intermediate extension} in Cartier crystals (see Definition \ref{IntermediateExtensionDef} and discussion thereafter).

\begin{Bem}
When dealing with intermediate extensions we use the assumption that $\mathcal{C}= \langle \kappa \rangle$ only to ensure that the upper shriek functor is well-defined for locally closed immersions (i.e.\ independent of the factorization). In the principal case we can exploit the equivalence with unit $F$-modules where the upper shriek corresponds to a pullback so that compatibilities are straightforward. This is probably also true for more general Cartier algebras but we do not verify this here.
\end{Bem}

\begin{Le}
\label{LocalizationIE}
Let $j: U \to \Spec R$ and $f: V \to \Spec R$ be open immersions. Consider the pull back square
\[ \begin{xy} \xymatrix{U \cap V \ar[r]^{j'} \ar[d]^{f'} & V \ar[d]^f\\ U \ar[r]^{j} & \Spec R} \end{xy}\]
Then $f^! j_{! \ast} \cong j'_{! \ast} f'^!$ in the category of Cartier crystals.%, where $j'$ and $f'$ are the pullbacks of $j$ and $f$.
\end{Le}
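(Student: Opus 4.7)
The plan is to verify the two inclusions $j'_{!\ast} f'^! M \subseteq f^! j_{!\ast} M$ and $f^! j_{!\ast} M \subseteq j'_{!\ast} f'^! M$ separately, exploiting the minimality definition of the intermediate extension together with the fact that for an open immersion the twisted inverse image $f^!$ coincides with $f^\ast$, is exact, and admits $f_\ast$ as a right adjoint with counit $f^! f_\ast \to \id$ an isomorphism. A preliminary step is to record the two base change isomorphisms $f^! j_\ast \cong j'_\ast f'^!$ and $j^! f_\ast \cong f'_\ast j'^!$ in (quasi-coherent) Cartier crystals, which reduce to flat base change for the underlying sheaves combined with compatibility of the Cartier structures, as in \cite[Lemma 4.5]{staeblerunitftestmoduln}.

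\emph{First inclusion.} Pulling back the monomorphism $j_{!\ast} M \hookrightarrow j_\ast M$ along the exact functor $f^!$ yields $f^! j_{!\ast} M \hookrightarrow f^! j_\ast M \cong j'_\ast f'^! M$. Applying $j'^!$ and using $j'^! f^! \cong f'^! j^!$ one obtains $j'^!(f^! j_{!\ast} M) \cong f'^! M$. The minimality of the intermediate extension on $V$ then forces $j'_{!\ast} f'^! M \subseteq f^! j_{!\ast} M$.

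\emph{Second inclusion.} Construct a candidate subcrystal $K \subseteq j_\ast M$ whose pullback under $f^!$ realises $j'_{!\ast} f'^! M$. Namely, let $K$ be the kernel of the composition
\[
j_\ast M \longrightarrow f_\ast f^! j_\ast M \cong f_\ast j'_\ast f'^! M \longrightarrow f_\ast\bigl( j'_\ast f'^! M / j'_{!\ast} f'^! M \bigr),
\]
where the first arrow is the unit of the adjunction. Exactness of $f^!$ together with $f^! f_\ast \cong \id$ yields $f^! K \cong j'_{!\ast} f'^! M$. To compute $j^! K$, apply the exact functor $j^!$ and use the base change $j^! f_\ast \cong f'_\ast j'^!$; the target $f'_\ast j'^!\bigl( j'_\ast f'^! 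M / j'_{!\ast} f'^! M \bigr)$ vanishes because $j'^!$ applied to both $j'_\ast f'^! M$ and $j'_{!\ast} f'^! M$ gives $f'^! M$. Hence $j^! K = M$, so minimality of $j_{!\ast} M$ forces $j_{!\ast} M \subseteq K$, and applying $f^!$ produces $f^! j_{!\ast} M \subseteq f^! K = j'_{!\ast} f'^! M$.

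\emph{Main obstacle.} The substantive content lies in the two base change isomorphisms and the identity $f^! f_\ast \cong \id$ at the level of Cartier crystals (rather than just underlying $\mathcal{O}$-modules): one needs these compatibilities to intertwine the Cartier structure and to be well-behaved after localising at nil-isomorphisms. Once this technical input is in place, the argument reduces to a formal manipulation of the defining minimality property of $j_{!\ast}$.
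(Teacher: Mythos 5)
Your proof is correct, but it takes a genuinely different route from the paper. The paper argues concretely: it reduces to $V = \Spec R_x$, takes an arbitrary subcrystal $A \subseteq f^! j_{!\ast}M$ with $j'^!A = f'^!M$, covers $U$ by principal opens $D(y_i)$, invokes the fact that two coherent subcrystals agreeing on $D(y_i)$ satisfy $y_i f^!N \subseteq A$, pulls this back along the localization map $\varphi\colon N \to N_x$, and then uses minimality of $N = j_{!\ast}M$ to force $\varphi^{-1}(A) = N$ and hence $A = N_x$. You instead prove the two containments by purely formal means: the first is the standard consequence of exactness of $f^!$ plus minimality on $V$, and the second hinges on the construction of $K$ as the kernel of $j_\ast M \to f_\ast\bigl(j'_\ast f'^!M / j'_{!\ast}f'^!M\bigr)$ --- in effect the fibre product $j_\ast M \times_{f_\ast j'_\ast f'^! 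M} f_\ast j'_{!\ast} f'^! M$ --- followed by $f^!K \cong j'_{!\ast}f'^!M$ (triangle identity) and $j^!K = M$ (base change $j^!f_\ast \cong f'_\ast j'^!$ and $j'^!$ killing the quotient). Your approach buys generality and cleanliness: it works for arbitrary open immersions into an arbitrary base, needs no affine reduction or covering argument, and isolates exactly which categorical inputs are required (exactness of restriction, the counit isomorphism $f^!f_\ast \cong \id$, and open-open base change for $f_\ast$, all of which hold for Cartier crystals and descend through the localization at nil-isomorphisms). The paper's argument is more hands-on but its key ingredient (the cited Lemma 2.2 giving $y_i f^!N \subseteq A$) is reused verbatim in the \'etale and smooth cases that follow, which is presumably why the author sets the localization machinery up here rather than arguing as you do.
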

\begin{proof}
Let $M$ be a Cartier crystal on $U$ and denote $j_{!\ast} M$ by $N$. The issue is local on $V$ so that we may assume that $V = \Spec R_x$ for some $x \in R$. Let $A \subseteq f^!N$ be such that $j'^! A = j'^! f^! N$. Denote the localization map $N \to N_x = f^! N$ by $\varphi$.

Choose an open affine covering $\bigcup_i D(y_i)$ of $U$. Since $A$ and $N_x$ restricted to $U \cap V$ coincide they coincide a fortiori when restricted to each $D(y_i) \cap V$. Hence, by \cite[Lemma 2.2]{blicklestaeblerfunctorialtestmodules} we have inclusions $y_i f^!N \subseteq A \subseteq f^! N$. Taking preimages along $\varphi$ we obtain $y_i N = y_i \varphi^{-1}(f^!N) \subseteq \varphi^{-1}(y_i f^!N) \subseteq \varphi^{-1} A \subseteq N$. Localizing at $y_i$ yields $\varphi^{-1}(A)_{y_i} = N_{y_i}$. Since the $D(y_i)$ form a covering of $U$ we conclude that $\varphi^{-1}(A)\vert_U = N\vert_U$. But since $N$ is minimal with this property $\varphi^{-1}(A) = N$. As $\varphi(\varphi^{-1}(A)) \subseteq A$ we conclude that $A = N_x$ since $A$ contains a system of generators.
\end{proof}

\begin{Prop}
\label{EtaleShriekIntermediateExtension}
Let $j: U \to Y$ be an open immersion, $f: X \to Y$ an \'etale morphism and let $M$ be a Cartier crystal on $U$ and let $N \subseteq j_\ast M$ such that $j^! N = M$. If $N$ is the intermediate extension of $M$, then $f^! N$ is the intermediate extension of $f'^! M$, where $f'$ is the pull back of $f$ along $j$. If $f$ is surjective, the converse holds.
\end{Prop}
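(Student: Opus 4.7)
The key tool is that for an \'etale morphism $f$ one has $f^! = f^\ast$, so that $f^!$ is left adjoint to $f_\ast$, together with the flat base change isomorphism $j^! f_\ast \cong f'_\ast j'^!$ (which holds on quasi-coherent Cartier modules and crystals since $j$ is an open immersion). Let $\psi \colon N \to f_\ast f^! N$ denote the unit of adjunction; it is a morphism of Cartier modules, and its restriction along $j$ coincides via base change with the corresponding unit $\psi|_U \colon M \to f'_\ast f'^! M$ for $f'$.

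For the forward direction, I would first observe that applying $f^!$ to $N \hookrightarrow j_\ast M$ and using base change yields $f^! N \subseteq j'_\ast f'^! M$, while $j'^! f^! N \cong f'^! j^! N = f'^! M$; hence $f^! N$ is a Cartier subcrystal of $j'_\ast f'^! M$ whose pullback equals $f'^! M$, so $f^! N \supseteq j'_{!\ast} f'^! M$. For the reverse inclusion, given any Cartier subcrystal $A \subseteq f^! N$ with $j'^! A = f'^! M$, I would set $N_0 := \psi^{-1}(f_\ast A) \subseteq N$. Exactness of $j^!$ together with base change gives $j^! N_0 = (\psi|_U)^{-1}(f'_\ast j'^! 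A) = (\psi|_U)^{-1}(f'_\ast f'^! M) = M$, so minimality of $N = j_{!\ast} M$ forces $N_0 = N$, i.e.\ $\psi(N) \subseteq f_\ast A$. Since $f^! N = f^\ast N$ is locally generated as an $\mathcal{O}_X$-module by the image of $\psi$ (a fundamental property of \'etale pullback), we deduce $A = f^! N$ and hence $f^! N = j'_{!\ast} f'^! M$.

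For the converse under the assumption that $f$ is surjective, suppose $f^! N = j'_{!\ast} f'^! M$ and pick any $N_0 \subseteq N$ with $j^! N_0 = M$. Then $f^! N_0 \subseteq f^! N$ and $j'^! f^! N_0 = f'^! j^! N_0 = f'^! M$, so the minimality of $j'_{!\ast} f'^! M$ forces $f^! N_0 = f^! N$. An \'etale surjection is faithfully flat, so $f^! = f^\ast$ is exact and faithful on (quasi-)coherent Cartier crystals; applied to $N/N_0$ this yields $N_0 = N$, whence $N = j_{!\ast} M$.

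The main technical point will be ensuring that the preimage construction $\psi^{-1}(f_\ast A)$ and the generation property $f^! N = \mathcal{O}_X \cdot \psi(N)$ interact correctly with both the Cartier structure and the passage from modules to crystals. Both are however formal consequences of the quasi-coherent statements together with the functoriality of $f_\ast$, $f^!$, and $j^!$ recalled in Section \ref{Cartiercrystals}.
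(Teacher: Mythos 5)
Your proof is correct, and it shares the paper's central idea --- transport the competing subobject $A \subseteq f^!N$ back to a subobject of $N$ and invoke minimality of $N$ --- but the execution is genuinely different and, in fact, more economical. The paper first localizes to the affine case (via Lemma \ref{LocalizationIE}), factors $f$ through its image to make it surjective so that the unit $\alpha\colon N \to f_\ast f^!N$ is \emph{injective}, forms $A' = f_\ast A \cap \alpha(N)$, and then needs a covering $D(y_i)$ of $U$ together with \cite[Lemma 2.2]{blicklestaeblerfunctorialtestmodules} to see that $A'$ agrees with $N$ on $U$. You instead take the \emph{preimage} $N_0 = \psi^{-1}(f_\ast A)$, which requires no injectivity of the unit and hence no surjectivity reduction and no localization; the identity $j^!N_0 = M$ then falls out directly from exactness of $j^!$ and the (trivial) base change $j^!f_\ast \cong f'_\ast j'^!$, replacing the covering argument. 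Your closing step ($\psi(N)\subseteq f_\ast A$ plus generation of $f^\ast N$ by $1\otimes N$, i.e.\ the triangle identity for $f^\ast \dashv f_\ast$) is essentially the paper's observation that $f^!A' \subseteq A$. The converse directions coincide: both are faithfully flat descent, using that for \'etale $f$ one has $\mathcal{C}_X\, f^\ast(\hbox{--}) = f^\ast(\mathcal{C}_Y\, \hbox{--})$ so that nilpotence descends. The one point you rightly flag --- that $A$ and the hypothesis $j'^!A = f'^!M$ are only given up to nil-isomorphism, so $N_0$ need only satisfy $j^!N_0 = M$ and $A \supseteq f^\ast N_0$ in the crystal sense --- is routine to repair, since $f^\ast$ and $f_\ast$ preserve (local) nilpotence; the paper's own proof carries the same implicit bookkeeping. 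Do make sure to record that the unit $n \mapsto 1\otimes n$ is Cartier-linear for the \'etale Cartier structure on $f^\ast N$, which is what makes $N_0$ a Cartier submodule.
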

\begin{proof}
Assume that $f^!N$ is not the intermediate extension. That is, there exists $A \subsetneq f^!N$ with $j'^! A = j'^! f^!N$. In particular, we find an open affine $\Spec R \subseteq Y$ and $\Spec S \subseteq f^{-1}(\Spec R)$ such that the inclusion $A \subsetneq f^!N$ restricted to $\Spec S$ is still proper. Using Lemma \ref{LocalizationIE} we have reduced to the situation where both $Y = \Spec R, X =\Spec S$ are affine. By factoring $X \to f(X) \to Y$ we may further assume that $f$ is surjective. It follows (\cite[Remark 2.19]{milne}) that $\alpha:N  \to f_\ast f^! N$ is injective. Fix a covering $D(y_i)$ of $U$ by open affines. By abuse of notation we will denote $f^{-1}(D(y_i))$ again by $D(y_i)$. Since $A$ and $f^!N$ restricted to each $D(y_i)$ agree we obtain from \cite[Lemma 2.2]{blicklestaeblerfunctorialtestmodules} an inclusion
\[ \mathcal{C} y_i f^! N = \mathcal{C} f^! y_i N \subseteq A \subseteq f^!N. \]
Note that by \cite[Lemma 6.1]{blicklestaeblerfunctorialtestmodules} $\mathcal{C} f^! y_iN = f^! \mathcal{C} y_i N$. Define the $\mathcal{C}_R$-module $A' = A \cap \alpha(N)$ and note that we have inclusions
\[\mathcal{C}_R y_i N \subseteq A' \subseteq N,\] where we identify $\alpha(N)$ with $N$. In particular, $A'$ and $N$ agree on $U$. Hence, we must have $A' = N$ since $N$ is the intermediate extension. It follows that $f^! A' = f^!N$. But $f^! A' \subseteq A \subsetneq f^! N$ which is a contradiction.

Assume now that $f$ is surjective. Then the converse follows from faithfully flat descent.
\end{proof}

\begin{Bem}
\label{IrredComponents}
Let us point out that in assuming that connected components are irreducible components we could also restrict our attention to irreducible schemes in proving that intermediate extensions commute with smooth pullbacks. Again, we do not impose this condition when proving that intermediate extensions commute with smooth pull backs in Cartier crystals but we do need it to apply the equivalence with perverse constructible sheaves.

Let $f: X \to Y$ be a smooth morphism and $i: U \to Y$ a locally closed immersion. Denote the connected (= irreducible) components of $Y$ by $Y_1, \ldots, Y_n$ and write $\alpha_e: Y_e \to Y$ for the inclusion.
Let us consider the following commutative diagram
\[ \begin{xy} \xymatrix{Y_e \ar[r]^{\alpha_e} &Y\\ Y_e \cap U \ar[u]^{i'} \ar[r]^{\alpha_e'}& U \ar[u]^i}\end{xy}\]
Then we claim that for any Cartier crystal $M$ on $U$ one has \begin{equation} \label{IrredCompReductionEq}i_{!*} M = \bigoplus_{e} \alpha'_{e \ast} i'_{!\ast} \alpha_e^! M.\end{equation}

By Lemma \ref{LclosedIEOpenIE} below we only need to look at the cases where $i$ is either a closed or open immersion. If $i$ is a closed immersion, then $i_{!\ast} = i_\ast$ and similarly for $i'$. Thus the right hand side of (\ref{IrredCompReductionEq}) is isomorphic to $\bigoplus_e i_\ast \alpha_{e \ast} \alpha_e^! M \cong i_\ast M$ as desired.

For $i$ an open immersion we use Proposition \ref{EtaleShriekIntermediateExtension} with $f = \alpha_e$ to conclude that $i_{!\ast} \alpha_e^! M = {\alpha'_e}^! i'_{!\ast}$. Since for a smooth morphism $f$ the functor $f^!$ commutes with direct sums one now readily obtains that $f^! i_{!\ast} = i'_{!\ast} f'^!$ from the irreducible case.
\end{Bem}

We now come to the general smooth case, which after some reductions can be handled in a similar manner as the \'etale case.

\begin{Theo}
\label{SmoothPullbackIE}
Let $f:X  \to Y$ be a smooth morphism of $F$-finite schemes and $j: U \to Y$ an open subset. Given a Cartier crystal $M$ with intermediate extension $j_{! \ast} M$ one has $j'_{! \ast} f'^! M \cong f^! j_{! \ast} M$ canonically, where $f'$ and $j'$ are the base changes of $f$ and $j$. Conversely, if $f$ is surjective and $f^! N$ is the intermediate extension of $f'^! M$, where $N$ is a Cartier crystal on $Y$, then $N$ is the intermediate extension of $M$.
\end{Theo}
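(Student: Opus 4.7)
The plan is to mirror the structure of Proposition \ref{EtaleShriekIntermediateExtension}: reduce to the affine setting via Lemma \ref{LocalizationIE}, factor the smooth morphism into an \'etale part and a sequence of affine-line projections, invoke Proposition \ref{EtaleShriekIntermediateExtension} for the former, and then carry out the argument by hand for the new case $\pi\colon \mathbb{A}^1_R \to \Spec R$.

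First I would reduce to $Y = \Spec R$, $X = \Spec S$ affine by covering $X$ by affine opens mapping into affine opens of $Y$ and applying Lemma \ref{LocalizationIE} on both sides (the statement is local on $X$). Shrinking $X$ further we may write $f = \pi \circ \psi$ with $\psi\colon \Spec S \to \mathbb{A}^n_R$ \'etale and $\pi\colon \mathbb{A}^n_R \to \Spec R$ the structure map. Proposition \ref{EtaleShriekIntermediateExtension} disposes of $\psi$, and by factoring $\pi$ as a chain of one-dimensional projections $\mathbb{A}^k_R \to \mathbb{A}^{k-1}_R$ and inducting on $n$, it suffices to treat the case $\pi\colon \mathbb{A}^1_R \to \Spec R$.

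For this central case, $\pi^! N = N[x]\, dx$ carries the Cartier action $n \otimes x^k dx \mapsto \kappa_N(n) \otimes x^{(k+1)/p - 1} dx$ (fractional exponents read as zero). Given $A \subseteq \pi^! N$ a Cartier subcrystal with $j'^! A = j'^! \pi^! N$ and an affine open cover $U = \bigcup_i D(y_i)$, the inclusion $\mathcal{C} y_i \pi^! N \subseteq A$ follows from \cite[Lemma 2.2]{blicklestaeblerfunctorialtestmodules}. I would then pull $A$ back to $N$ by setting
\[ A_0 = \{\, n \in N : n \otimes dx \in A\,\} \subseteq N. \]
The key observation is that $A$ is $R[x]$-stable, so $x^{p-1}(n \otimes dx) = n \otimes x^{p-1} dx \in A$ whenever $n \in A_0$, and then the formula above yields $\kappa_N(n) \otimes dx = \kappa_{\pi^! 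N}(n \otimes x^{p-1} dx) \in A$, showing that $A_0$ is $\kappa_N$-stable. The inclusion $y_i \pi^! N \subseteq A$ at once gives $y_i N \subseteq A_0$, hence $j^! A_0 = j^! N = M$. Minimality of $N = j_{!\ast} M$ forces $A_0 = N$, so $N \otimes dx \subseteq A$; since $A$ is $R[x]$-stable and $R[x] \cdot (N \otimes dx) = N[x]\, dx = \pi^! N$, we conclude $A = \pi^! N$.

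For the converse, assume $f$ is surjective. If $N' \subsetneq N$ were a proper Cartier subcrystal of $j_\ast M$ with $j^! N' = M$, then by faithful flatness of $f$ (reflecting both strict containments and nilpotence) $f^! N' \subsetneq f^! N$ would still satisfy $j'^! f^! N' = f'^! j^! N' = j'^! f^! N$, contradicting the minimality of $f^! N = j'_{!\ast} f'^! M$. The main obstacle is the verification of Cartier stability of $A_0$ in the projection case, which hinges on carefully matching the fractional exponent convention in the Cartier action formula; once this is in place, the reductions through Lemma \ref{LocalizationIE}, Proposition \ref{EtaleShriekIntermediateExtension}, and the induction on $n$ are routine.
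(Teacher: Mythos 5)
Your proposal is correct and follows essentially the same route as the paper: reduce to the affine case, factor the smooth map as an \'etale morphism followed by an affine-space projection, dispose of the \'etale part via Proposition \ref{EtaleShriekIntermediateExtension}, and pull a competing subcrystal $A$ back to $Y$ along the injective map $n \mapsto n \otimes dx$ to contradict minimality. Your explicit verification that $A_0$ is $\kappa$-stable via multiplication by $x^{p-1}$ is exactly the detail hidden in the paper's phrase ``one can reason just as in Proposition \ref{EtaleShriekIntermediateExtension},'' and your induction reducing $\mathbb{A}^n$ to $\mathbb{A}^1$ is an inessential variant of the paper's direct treatment with $dx_1 \wedge \cdots \wedge dx_n$.
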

\begin{proof}
First of all, note that $f^! j_{!\ast} M \subseteq f^! j_\ast M \cong j'_\ast f'^! M$ by \cite[Proposition 8.2]{blicklestaeblerfunctorialtestmodules}. Assume that it is not the intermediate extension. That is, there is $A \subsetneq f^! j_{!\ast} M$ with $j^! A = f'^! M$. In particular, we find an open affine $V$ such that $A\vert_V \subsetneq f^! j_{! \ast} M\vert_V$. Shrinking $V$ if necessary we find an open affine $W \subseteq X$ such that $f(W) \subseteq V$ and such that $f\vert_W$ factors as $W \to \mathbb{A}^n_V \to V$, where the first morphism is \'etale and the second  is the structural morphism. Using Proposition \ref{EtaleShriekIntermediateExtension} we only have to deal with the case $f: \mathbb{A}^n_V \to V$, where $V$ is affine. If $x_1, \ldots, x_n$ denote coordinates for $\mathbb{A}^n_V$ then the map \[\alpha: N \longrightarrow f_\ast f^! N, n \longmapsto n \otimes dx_1 \wedge \ldots \wedge dx_n\] is injective (see \cite[Remark 2.19]{milne}). Now one can reason just as in Proposition \ref{EtaleShriekIntermediateExtension} to derive a contradiction.
\end{proof}

\begin{Le}
\label{LclosedIEOpenIE}
Let $j: U \to X$ be a locally closed immersion which factors as $j = fg$ with $f$ a closed immersion and $g$ an open immersion. Then if $(M, \kappa)$ is a Cartier crystal on $U$, we have $j_{!\ast} M \cong f_\ast g_{!\ast} M$ canonically.
\end{Le}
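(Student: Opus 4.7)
The strategy is essentially formal, using the universal property that defines $j_{!\ast}$. Writing $j = fg$, we have $j_\ast M = f_\ast g_\ast M$ and $j^! = g^! f^!$. For the closed immersion $f$ the natural map $f^! f_\ast \to \id$ on $Crys(Z)$ is an isomorphism (this is immediate from the description of $f^!$ for a finite morphism given after Theorem \ref{CartierShriek}, since $f_\ast \mathcal{O}_Z$ is a quotient of $\mathcal{O}_X$ annihilated by the ideal of $Z$). Consequently $f_\ast g_{!\ast} M$ is a subcrystal of $j_\ast M$, and
\[ j^! (f_\ast g_{!\ast} M) = g^! f^! f_\ast g_{!\ast} M = g^! g_{!\ast} M = M, \]
so $f_\ast g_{!\ast} M$ is a candidate for $j_{!\ast} M$.

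The heart of the argument is minimality. Let $N \subseteq j_\ast M = f_\ast g_\ast M$ be any subcrystal satisfying $j^! N = M$. First I would observe that $f_\ast g_\ast M$, being the pushforward of something along the closed immersion $f$, is annihilated by the ideal sheaf of $Z$; therefore so is $N$. This allows one to view $N$ as an object of the essential image of $f_\ast$: choosing a Cartier-module representative, $N$ descends to a Cartier module $N'$ on $Z$ together with a monomorphism $N' \into g_\ast M$, and back in crystals we have $N = f_\ast N'$ with $N' \cong f^! N$. Then
\[ g^! N' = g^! f^! N = j^! N = M, \]
so by minimality of the intermediate extension along the open immersion $g$ we have $g_{!\ast} M \subseteq N'$, and applying $f_\ast$ gives $f_\ast g_{!\ast} M \subseteq f_\ast N' = N$, as desired. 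This also shows existence of $j_{!\ast} M$ for a locally closed immersion, reducing it to existence in the open case (Theorem 6.13 of \cite{schedlmeiercartierpervers}).

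The main obstacle is the descent step, i.e.\ that subcrystals of $f_\ast g_\ast M$ lie in the essential image of $f_\ast$. This is not a subtle issue at the module level, since the ideal of $Z$ annihilates any submodule of $f_\ast g_\ast M$ and the Cartier structure respects this vanishing, so everything descends to $Z$; but one has to be mildly careful about passing between quasi-coherent Cartier modules and their crystal classes so as to ensure that the inclusion $N \into j_\ast M$ really is represented by an inclusion of Cartier modules to which the argument can be applied. Once this is set up cleanly, the rest of the proof is the formal manipulation above.
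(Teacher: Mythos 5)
Your proposal is correct and follows essentially the same route as the paper: both verify $j^!(f_\ast g_{!\ast}M)=M$ via $f^!f_\ast\cong\id$, then reduce minimality to that of $g_{!\ast}M$ by descending subcrystals of $j_\ast M$ along the closed immersion $f$ (the paper checks support via exactness of $v^!$ for the complementary open immersion rather than via annihilation by the ideal sheaf, and phrases minimality as ``no proper subcrystal of $f_\ast g_{!\ast}M$ restricts to $M$'' instead of showing containment in every candidate $N$, but these are equivalent since $j^!$ is exact). No gaps.
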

\begin{proof}
Using \cite[Theorem 5.12 (c)]{schedlmeiercartierpervers} and \cite[Theorem 4.1.2]{blickleboecklecartiercrystals} we have \[j^! f_\ast g_{!\ast} M \cong g^! f^! f_\ast g_{!\ast}M \cong M.\] It remains to show minimality. If $A \subsetneq f_\ast g_{!\ast}M$ is a subcrystal, then $A$ is also supported in $Z$, where $f:Z \to X$. Indeed, if we denote the associated open immersion of the complement by $v: V \to X$, then $v^! f_\ast g_{!\ast}M = 0$ and $v^!$ is exact so that $v^! A = 0$. Hence, we have $A = f_\ast B$ for some crystal $B$ on $Z$. Then \[B = f^! f_\ast B \subsetneq f^! f_\ast g_{!\ast} M = g_{!\ast} M,\] where we use that $f^! f_\ast = id$. As $g_{!\ast} M$ is minimal we conclude that $g^! B \subsetneq M$. Hence, $f_\ast g_{!\ast}M$ is minimal.
\end{proof}

Note that this shows in particular existence of intermediate extensions for locally closed immersions. Also note that this implies, with \cite[Proposition 6.18]{schedlmeiercartierpervers}, that any simple Cartier crystal on $X$ is of the form $j_{!\ast}M$ for $j: U \to X$ a locally closed immersion.

We come to the main result

\begin{Theo}
Let $f:X \to Y$ be a smooth morphism of $F$-finite embeddable schemes and $j: U \to Y$ a locally closed immersion. If we denote the base change of $j$ by $j'$ and the base change of $f$ by $f'$ then the functors $j'_{! \ast} f'^{-1}$ and $f^{-1} j_{!\ast}$ from perverse constructible $\mathbb{F}_p$-sheaves on $U$ to perverse constructible $\mathbb{F}_p$-sheaves on $Y$ are naturally isomorphic.
%Then in the category of perverse $\mathbb{F}_p$-sheaves on $U$ one has for any locally closed immersion $j: U \to Y$  $j'_{! \ast} f'^{-1} \mathcal{F} \cong f^{-1} j_{!\ast} \mathcal{F}$, where $j'$ and $f'$ are the base changes of $j$ and $f$.
\end{Theo}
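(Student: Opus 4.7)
The plan is to transfer the statement from perverse constructible $\mathbb{F}_p$-sheaves to Cartier crystals via the anti-equivalence $\Phi := \Sol \circ \Sigma$ of Corollary \ref{CrysPervEquivalence}, prove the analogous result on the Cartier side using the tools of this section, and transport back. Two ingredients beyond what has already been recorded are needed.

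First, I would extend Theorem \ref{SmoothPullbackIE} from open immersions to locally closed immersions on the Cartier side. Factor $j = i \circ k$ with $i: \bar{U} \to Y$ a closed immersion and $k: U \to \bar{U}$ an open immersion, and form the base-change diagram under the smooth $f$, yielding a closed immersion $i'$ over $i$, an open immersion $k'$ over $k$, and a smooth morphism $f''$ over $\bar{U}$. Combining Lemma \ref{LclosedIEOpenIE} (which gives $j_{!\ast} M \cong i_\ast k_{!\ast} M$), the smooth base-change isomorphism $f^! i_\ast \cong i'_\ast f''^!$ from \cite[Lemma 4.5]{staeblerunitftestmoduln}, Theorem \ref{SmoothPullbackIE} applied to the open immersion $k$, and Lemma \ref{LclosedIEOpenIE} applied to $j'$, one obtains
\[
f^! j_{!\ast} M \cong f^! i_\ast k_{!\ast} M \cong i'_\ast f''^! k_{!\ast} M \cong i'_\ast k'_{!\ast} f'^! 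M \cong j'_{!\ast} f'^! M,
\]
natural in the Cartier crystal $M$.

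Second, I would verify that $\Phi$ intertwines the Cartier-crystal intermediate extension with the perverse intermediate extension: denoting the former by $j_{!\ast}^{C}$ and the latter by $j_{!\ast}^{P}$, the claim is $\Phi \circ j_{!\ast}^{C} \cong j_{!\ast}^{P} \circ \Phi$. Since $\Phi$ is an anti-equivalence that by Corollary \ref{CrysPervEquivalence} intertwines $j_\ast$ on the Cartier side with $j_!$ on the perverse side and $j^!$ with $j^{-1}$, while reversing inclusions, the characterization of $j_{!\ast}^{C} N$ as the smallest subobject of $j_\ast N$ whose $j^!$-restriction equals $N$ translates to the characterization of $\Phi(j_{!\ast}^{C} N)$ as the largest quotient of $j_! \Phi(N)$ whose $j^{-1}$-restriction equals $\Phi(N)$. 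This is precisely $j_{!\ast}^{P} \Phi(N)$, by the standard dual description of $j_{!\ast}^{P}$ as the image of $j_! \to j_\ast$.

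Combining these ingredients with Corollary \ref{CrystoPervSmoothEmbeddable} ($\Phi \circ f^! \cong f^{-1} \circ \Phi$) yields the chain
\[
f^{-1} \circ j_{!\ast}^{P} \cong \Phi \circ f^! \circ j_{!\ast}^{C} \circ \Phi^{-1} \cong \Phi \circ j'_{!\ast}{}^{C} \circ f'^! \circ \Phi^{-1} \cong j'_{!\ast}{}^{P} \circ f'^{-1}
\]
of natural isomorphisms, which is the desired identification. The main obstacle is the compatibility $\Phi \circ j_{!\ast}^{C} \cong j_{!\ast}^{P} \circ \Phi$: it is not formally recorded earlier in the paper and rests on the careful ``smallest subobject $\leftrightarrow$ largest quotient'' dualization under the anti-equivalence, together with the standard dual characterization of the perverse $j_{!\ast}$ as both a subobject of $j_\ast$ and a quotient of $j_!$. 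The extension of Theorem \ref{SmoothPullbackIE} to locally closed $j$ is then formal given Lemma \ref{LclosedIEOpenIE} and the relevant smooth base-change identity.
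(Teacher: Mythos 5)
Your proposal follows essentially the same route as the paper's proof: transfer to Cartier crystals via the anti-equivalence, factor $j$ into a closed immersion followed by an open immersion, treat the two cases separately via Lemma \ref{LclosedIEOpenIE}, and handle the closed case by the smooth base-change isomorphism of \cite[Lemma 4.5]{staeblerunitftestmoduln} and the open case by Theorem \ref{SmoothPullbackIE}. The only divergence is that you sketch the compatibility $\Phi \circ j_{!\ast} \cong j_{!\ast} \circ \Phi$ by hand --- where you should say the \emph{smallest} quotient of $j_!\Phi(N)$ restricting to $\Phi(N)$ (equivalently, the quotient by the maximal subobject supported off $U$), since the \emph{largest} such quotient is $j_!\Phi(N)$ itself --- whereas the paper simply cites \cite[Corollary 6.21]{schedlmeiercartierpervers} for this step.
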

\begin{proof}
By \cite[Corollary 6.21]{schedlmeiercartierpervers} and Corollary \ref{CrystoPervSmoothEmbeddable} it is sufficient to prove the corresponding statement in Cartier crystals, where the Cartier algebra is of the form $\mathcal{C} = R\langle\kappa\rangle$, i.e.\ principally generated. We may factor $j$ as $gh$, where $g$ is a closed immersion and $h$ is an open immersion. Moreover, by Lemma \ref{LclosedIEOpenIE} we may treat these two cases separately. For a closed immersion this is \cite[Lemma 4.5]{staeblerunitftestmoduln} and for an open immersions it follows from Theorem \ref{SmoothPullbackIE}. 
\end{proof}

\bibliography{bibliothek.bib}

\providecommand{\bysame}{\leavevmode\hbox to3em{\hrulefill}\thinspace}
\providecommand{\MR}{\relax\ifhmode\unskip\space\fi MR }
% \MRhref is called by the amsart/book/proc definition of \MR.
\providecommand{\MRhref}[2]{%
  \href{http://www.ams.org/mathscinet-getitem?mr=#1}{#2}
}
\providecommand{\href}[2]{#2}
\begin{thebibliography}{EK04b}

\bibitem[BB11]{blickleboecklecartierfiniteness}
M.~Blickle and G.~B\"ockle, \emph{Cartier modules: {F}initeness results}, J.
  reine angew. Math. \textbf{661} (2011), 85--123.

\bibitem[BB13a]{blickleboecklecartiercrystals}
\bysame, \emph{Cartier crystals}, arXiv:1309.1035v1 (2013).

\bibitem[BB13b]{blickleboecklecartierduality}
\bysame, \emph{Cartier crystals: Duality}, preprint (2013).

\bibitem[Bli13]{blicklep-etestideale}
M.~Blickle, \emph{Test ideals via $p^{-e}$-linear maps}, J. Algebraic Geom.
  \textbf{22} (2013), no.~1, 49--83.

\bibitem[BS16]{blicklestaeblerfunctorialtestmodules}
M.~Blickle and A.~St{\"a}bler, \emph{Functorial {T}est {M}odules}, preprint,
  arXiv:1605.09517 (2016).

\bibitem[EK04a]{emertonkisinintrorhunitfcrys}
M.~Emerton and M.~Kisin, \emph{An introduction to the {R}iemann-{H}ilbert
  correspondence for unit ${F}$-crystals}, Geometric aspects of {D}work theory,
  vol.~2, Walter de Gruyter, 2004, pp.~677--700.

\bibitem[EK04b]{emertonkisinrhunitfcrys}
\bysame, \emph{The {R}iemann-{H}ilbert correspondence for unit ${F}$-crystals},
  Asterisque \textbf{293} (2004).

\bibitem[Gab04]{gabbertstructures}
O.~Gabber, \emph{Notes on some $t$-structures}, Geometric aspects of Dwork
  theory, Walter de Gruyter, 2004, pp.~711--734.

\bibitem[GD61]{EGAII}
A.~Grothendieck and J.~Dieudonn\'{e}, \emph{El\'{e}ments de {G}\'{e}om\'{e}trie
  alg\'{e}brique {II}}, vol.~8, Inst. Hautes \'{E}tudes Sci. Publ. Math., 1961.

\bibitem[Gro71]{SGA1}
A.~Grothendieck, \emph{Rev\^{e}tements \'etales et groupe fondamental
  {\rm(}={SGA1}{\rm)}}, Springer, 1971, also available via
  arXiv:math/0206203v2.

\bibitem[Har66]{hartshorneresidues}
R.~Hartshorne, \emph{Residues and duality}, Lecture Notes in Mathematics,
  vol.~20, Springer, 1966.

\bibitem[Mil80]{milne}
J.~S. Milne, \emph{\'{E}tale cohomology}, Princeton University Press, 1980.

\bibitem[Sch16]{schedlmeiercartierpervers}
T.~Schedlmeier, \emph{Cartier crystals and perverse constructible \'etale
  $p$-torsion sheaves}, PhD thesis (2016).

\bibitem[St{\"a}17]{staeblerunitftestmoduln}
A.~St{\"a}bler, \emph{Test module filtrations for unit {$F$}-modules}, J.
  Algebra \textbf{477} (2017), 435--471.

\end{thebibliography}
\bibliographystyle{amsalpha}
\end{document}